\newcommand{\conv}{\mathrm{conv}}
\renewcommand{\Re}{\mathbb{R}}
\newcommand{\sB}{\mathfrak{B}}
\newcommand{\sC}{\mathfrak{C}}
\newcommand{\bbN}{\mathbb{N}}
\newcommand{\td}{\tilde{\delta}}
\renewcommand{\vert}{\mathrm{Vert}}
\newcommand{\sfrac}[2]{\mbox{$\frac{#1}{#2}$}}
\newcommand{\half}{\sfrac{1}{2}}
\newcommand{\mat}[1]{\left[ \begin{matrix} #1 \end{matrix} \right]}
\newcommand{\introcounter}[1]{\setcounterref{introthm}{#1}%
\addtocounter{introthm}{-1}}
\newtheorem{introthm}{Theorem}
\newtheorem{thm}{Theorem}
\newtheorem{prop}[thm]{Proposition}
\newtheorem{cor}[thm]{Corollary}
\newtheorem{lem}[thm]{Lemma}
\title{Diversities and the Generalized Circumradius}
\author[1]{David Bryant}
\author[2]{Katharina T. Huber}
\author[2]{Vincent Moulton}
\author[3]{Paul F.~Tupper} 
\affil[1]{\small Department of Mathematics and Statistics, University of Otago, Dunedin, New Zealand.  email: {\tt david.bryant@otago.ac.nz} phone: {\tt +64 34797889}. (Corresponding author)}
\affil[2]{\small School of Computing Sciences, University of East Anglia, NR4 7TJ, Norwich, United Kingdom}
\affil[3]{\small Department of Mathematics, Simon Fraser University, Burnaby, British Columbia V5A 1S6, Canada}
\date{\today}
\begin{document}
\maketitle


\begin{abstract}
    The {\em generalized circumradius} of a set of points $A \subseteq \Re^d$ with respect to a convex body $K$ equals the minimum value of $\lambda \geq 0$ such that a translate of $\lambda K$ contains $A$. Each choice of $K$ gives a different function on the set of bounded subsets of $\Re^d$; we characterize which functions can arise in this way. Our characterization draws on the theory of {\em diversities}, a recently introduced generalization of metrics from functions on pairs to functions on finite subsets. We additionally investigate functions which arise by restricting the generalized circumradius to a finite subset of $\Re^d$. We obtain elegant characterizations in the case that $K$ is a simplex or parallelotope.
\end{abstract}

The {\em circumradius} of a set of points in the plane is the radius of the smallest circle containing them. The concept is key to optimal containment and facility location problems, including a classic problem studied by Sylvester \cite{Jahn17,Sylvester57}, since the center of the smallest enclosing circle minimizes the maximum distance to any of the points.

The {\em generalized circumradius} replaces the plane with $\Re^d$ and the circle or ball with a general {\em convex body}, that is a compact, convex set with non-empty interior. For a convex body $K$ in $\Re^d$ and bounded $A \subseteq \Re^d$ we say that the generalized circumradius of $A$ with respect to $K$ is
\[R(A,K) = \inf\{\lambda \ge 0 : A \subseteq \lambda K+x \mbox{ for some $x \in \Re^d$}\}.\]
In other words, $R(A,K)$ equals the minimal amount that $K$ must be scaled so that a translate covers $A$ (see Fig.~\ref{MinkowskiExample}). The set $K$ is called the {\em kernel}. See \cite{BrandenbergKonig13,BrandenbergKonig15,Gonzalez-MerinoJahnEtal19,Jahn17b} for properties and inequalities related to the generalized circumradius and \cite{BrandenbergRoth09,Gritzmann93,GritzmannKlee92,GritzmannKlee94} for computational results.

\begin{figure}[htb]
\begin{center}
    \includegraphics[width=0.8\textwidth]{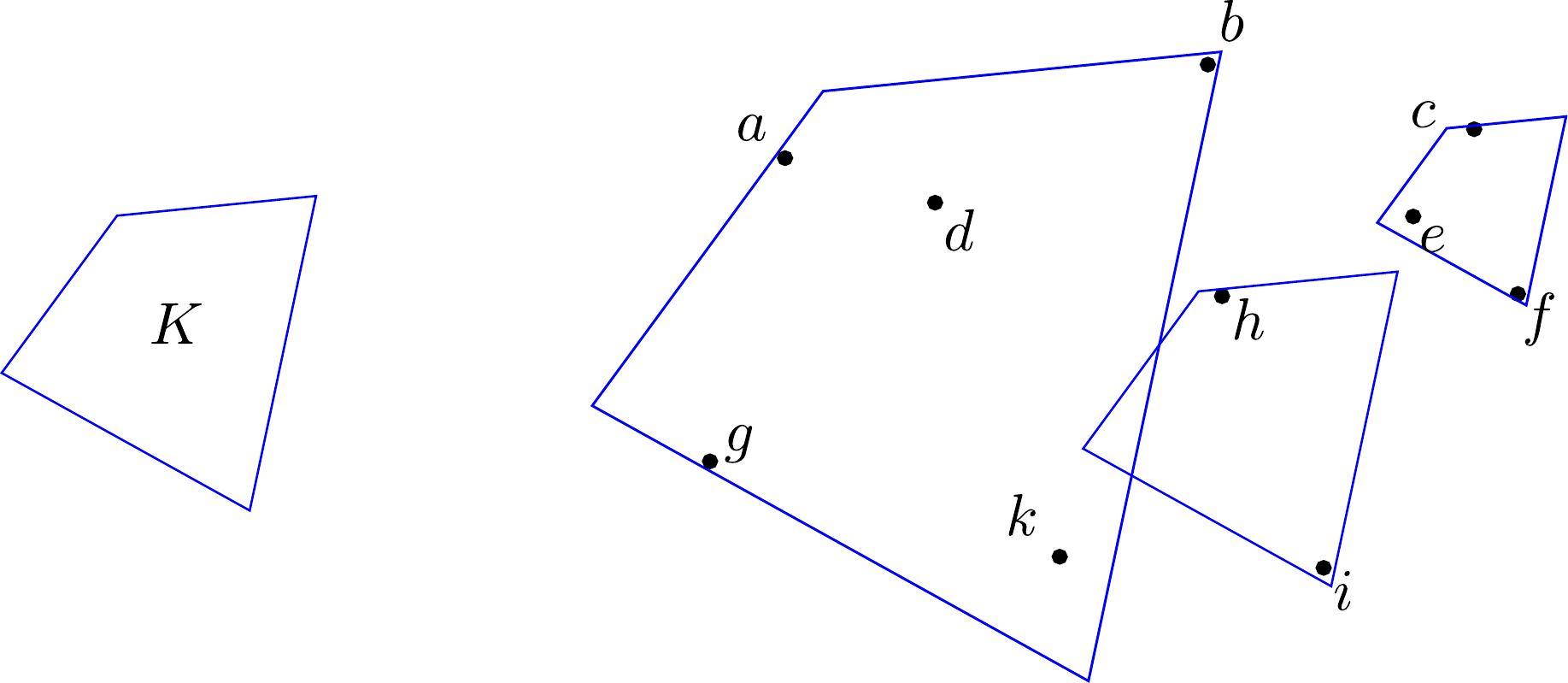}
\end{center}
\caption{ \label{MinkowskiExample} \small An example of the generalized circumradius. In this example $R(\{a,b,d,g,k\},K) = 2$ since $2K$ is the smallest scaled version of $K$ which can be translated to cover $\{a,b,d,g,k\}$. Similarly, $R(\{c,e,f\},K) = 0.6$, since $0.6K$ is the smallest scaled version of $K$ which can be translated to cover $\{c,e,f\}$, and $R(\{h,i\},K) = 1$.}
\end{figure}

Our motivation for studying the generalized circumradius comes from connections with {\em diversity theory}. A (mathematical) diversity is an extension of the idea of a metric space. Instead of assigning values just to pairs of objects, a diversity assigns values to all finite sets of objects. More formally, a {\em diversity} is a pair $(X,\delta)$ where $X$ is a set and $\delta$ a function from finite subsets of $X$ to $\Re_{\geq 0}$ such that, for $A,B,C$ finite, 
\begin{enumerate}
    \item[(D1)] $\delta(A)=0$ if and only if $|A| \leq 1$.
    \item[(D2)] If $B \neq \emptyset$ then $\delta(A \cup C) \leq \delta(A \cup B) + \delta(B \cup C)$.
\end{enumerate}
Diversities were introduced in \cite{BryantTupper12}. A consequence of (D1) and (D2) is that diversities are {\em monotonic}, that is, if $A \subseteq B$ then $\delta(A) \le \delta(B)$. Furthermore, $\delta$ restricted to pairs satisfies the definition of a metric; we call this the metric {\em induced} by $(X,\delta)$. We say that a diversity $(X,\delta)$ is {\em finite} if $X$ is finite. Note that on occasion we use the term `diversity' to refer to the function $\delta$ rather than the pair $(X,\delta)$. 

Many well-known functions on sets are diversities. Examples include
\begin{enumerate}
    \item The diameter of a set
    \item The length of a shortest Steiner tree connecting a set
    \item The mean width of a set
    \item The length of the shortest traveling salesman tour through a set
    \item The $L_1$ diversity (in $\Re^d$)
    \[\delta(A)= \delta_1(A) = \sum_{i=1}^d \max\{|a_i-a'_i|:a,a' \in A\}\]
    \item The circumradius of a set. 
\end{enumerate}
We show that the generalized circumradius is also a diversity:

\introcounter{thm:GCRdiversity}
\begin{introthm} Let $K$ be a convex body in $\Re^d$. If we define $\delta(A) = R(A,K)$ for all finite $A$ then $(\Re^d,\delta)$ is a diversity.
\end{introthm}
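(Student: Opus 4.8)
The plan is to verify the two diversity axioms (D1) and (D2) directly from the definition of $R(\cdot,K)$, the geometric heart of the argument being the elementary identity $\lambda_1 K + \lambda_2 K = (\lambda_1+\lambda_2)K$, valid for any convex $K$ and $\lambda_1,\lambda_2 \ge 0$ (the inclusion $\subseteq$ uses convexity via $\frac{\lambda_1}{\lambda_1+\lambda_2}k_1 + \frac{\lambda_2}{\lambda_1+\lambda_2}k_2 \in K$, and $\supseteq$ is immediate).

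For (D1), I would first observe that a single point (or the empty set) is covered by a translate of $0\cdot K = \{0\}$, so $R(A,K)=0$ whenever $|A|\le 1$. For the converse I would use that $K$ is bounded: writing $D$ for the diameter of $K$ (finite, and strictly positive since $K$ is a convex body), every translate $\lambda K + x$ has diameter $\lambda D$, so if $A$ contains two points at distance $d>0$ then any covering $\lambda K + x$ forces $\lambda \ge d/D$, giving $R(A,K) \ge d/D > 0$.

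For (D2), I would first record that, for a nonempty bounded set $A$, the infimum defining $R(A,K)$ is attained: taking a minimizing sequence $(\lambda_n,x_n)$ and fixing any $a \in A$, the inclusion $a \in \lambda_n K + x_n$ keeps $(x_n)$ bounded (since $\lambda_n \to R(A,K)$ and $K$ are bounded), so a subsequence converges and compactness of $K$ yields an optimal pair. (Alternatively one can work throughout with $\varepsilon$-approximate covers and dispense with attainment.) Since $B \neq \emptyset$, both $A\cup B$ and $B\cup C$ are nonempty; set $\lambda_1 = R(A\cup B,K)$, $\lambda_2 = R(B\cup C,K)$ and choose optimal translates with $A\cup B \subseteq \lambda_1 K + x_1$ and $B\cup C \subseteq \lambda_2 K + x_2$. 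Fixing $b \in B$, we then have $b - x_1 \in \lambda_1 K$ and $b - x_2 \in \lambda_2 K$.

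The key step is to exhibit a single translate of $(\lambda_1+\lambda_2)K$ covering $A \cup C$. I would take $x_3 = x_1 + x_2 - b$ and verify both inclusions via the Minkowski identity: for $a \in A$, $a - x_3 = (a - x_1) + (b - x_2) \in \lambda_1 K + \lambda_2 K = (\lambda_1+\lambda_2)K$, and for $c \in C$, $c - x_3 = (c - x_2) + (b - x_1) \in \lambda_2 K + \lambda_1 K = (\lambda_1+\lambda_2)K$. Hence $A\cup C \subseteq (\lambda_1+\lambda_2)K + x_3$, so $R(A\cup C,K) \le \lambda_1 + \lambda_2$, which is exactly (D2). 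I expect the only genuinely delicate points to be the justification of attainment (or the $\varepsilon$-argument replacing it) and the correct use of $b$ as a common anchor linking the two translates; once $x_3$ is chosen as above, the verification is routine.
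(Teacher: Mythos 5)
Your proof is correct and follows essentially the same route as the paper: the paper anchors $A\cup B$ and $B\cup C$ at a common point of $B$ and invokes translation invariance together with subadditivity of $R$ under Minkowski sums (which rests on the same identity $\lambda_1 K+\lambda_2 K=(\lambda_1+\lambda_2)K$ you use), while you simply unpack that argument into an explicit translate $x_3=x_1+x_2-b$. The only differences are cosmetic: you verify attainment of the infimum and the converse direction of (D1) by hand, where the paper cites known properties of the generalized circumradius.
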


In reference to the concept of a {\em Minkowski norm} we say that a diversity $(\Re^d,\delta)$  is a {\em Minkowski diversity} if there is a convex body $K$ such that $\delta(A) = R(A,K)$ for all finite $A \subset \Re^d$.

Theorem~\ref{thm:GCRdiversity} connects the generalized circumradius to a growing and varied literature on diversity theory. The first diversity paper \cite{BryantTupper12} described diversity analogs to the metric tight span and metric hyperconvexity, leading to new results in analysis and fixed point theory \cite{KirkShahzad14,Piatek14,Poelstra13}. It was shown in \cite{BryantTupper14} that results of \cite{LinialLondonEtal95} and others on  embedding of finite metrics in $L_1$ can be extended to diversities, with potential algorithmic gains. There is a direct analog of the Urysohn metric space \cite{BryantNiesEtal17} for diversities and work on diversity theory within model theory \cite{BryantNiesEtal21,Hallback20}, lattice theory \cite{BryantFelipeEtal20,Toledo-Acosta16}, image analysis \cite{Dokania16}, machine learning \cite{KomodakisPawan-KumarEtal16} and phylogenetics \cite{BryantCioica-LichtEtal21,BryantTupper12,Steel14,Steel16}.

In this paper we are mainly concerned with characterizations and embeddings for Minkowski diversities---what characterizes these diversities  and which finite diversities can be embedded into a Minkowski diversity. Such embeddings (possibly  with  distortion) should in future provide valuable graphical representations of diversities in addition to algorithmic and computational tools. 

Regarding characterization we prove the following result.
A real-valued function $f$ on subsets of $\Re^d$ is said to be {\em sublinear} if 
\begin{align*}
    f(A+B) & \leq f(A) + f(B) & \mbox{for all $A,B$,}\\
    f(\lambda A) & = \lambda f(A) & \mbox{for all $A$ and $\lambda \geq 0$},
\end{align*}
where $A+B$ denotes the Minkowski sum.

\introcounter{GCRcharacterize}
\begin{introthm}
Let $(\Re^d,\delta)$ be a diversity. Then $\delta$ is a Minkowski diversity if and only if $\delta$ is sublinear and for all finite $A,B$ there are $a,b \in \Re^d$ such that 
\[ \delta( (A + a) \cup (B + b) ) \leq \max\{\delta(A),\delta(B)\}.\]
\end{introthm}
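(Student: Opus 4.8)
The plan is to prove both directions by relating the generalized circumradius to the support function machinery of convex bodies, and to recover the kernel $K$ from $\delta$ via a duality construction.

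\medskip

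\noindent\textbf{The forward direction.} Suppose $\delta(A) = R(A,K)$ for a convex body $K$. I would first verify sublinearity directly from the definition of $R$. Homogeneity $\delta(\lambda A) = \lambda\delta(A)$ is immediate: $A \subseteq \mu K + x$ iff $\lambda A \subseteq \lambda\mu K + \lambda x$. For subadditivity under Minkowski sums, I would take near-optimal containments $A \subseteq \lambda K + a$ and $B \subseteq \mu K + b$ with $\lambda \approx R(A,K)$, $\mu \approx R(B,K)$, and use convexity of $K$ to observe $(\lambda K + a) + (\mu K + b) = (\lambda+\mu)K + (a+b)$, so that $A+B \subseteq (\lambda+\mu)K + (a+b)$, giving $R(A+B,K) \le R(A,K)+R(B,K)$. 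The second condition is the key structural fact: given $A,B$, pick translates so that both $A+a$ and $B+b$ are covered by $\lambda K$ centered consistently, where $\lambda = \max\{R(A,K),R(B,K)\}$; choosing $a,b$ to align the two optimal covering translates to a common center $0$ yields $(A+a)\cup(B+b) \subseteq \lambda K$, hence $\delta((A+a)\cup(B+b)) \le \lambda = \max\{\delta(A),\delta(B)\}$.

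\medskip

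\noindent\textbf{The converse and reconstruction of $K$.} This is where the real work lies. Given a sublinear diversity $\delta$ satisfying the translation-alignment inequality, I must manufacture a convex body $K$ with $R(\,\cdot\,,K) = \delta$. The natural candidate is to define $K$ through its support function or, dually, as a sublevel set. Since $\delta$ is sublinear on the Minkowski-sum semigroup of bounded sets, I expect $\delta$ restricted to convex bodies (or to finite sets, then extended) to behave like a support-function-type functional, so that the set
\[
K = \{\, x \in \Re^d : \delta(\{0,x\}) \le 1 \,\} \quad\text{(or a suitable polar/dual of a level set)}
\]
is the correct kernel, up to translation. I would need to show $K$ is convex (from subadditivity applied to pairs), bounded and with nonempty interior (from (D1) together with the induced metric being a genuine norm-like metric), hence a convex body.

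\medskip

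\noindent\textbf{Closing the loop.} The crux is proving $R(A,K) = \delta(A)$ for this reconstructed $K$. One inequality should follow because whenever $A \subseteq \lambda K + x$, monotonicity and sublinearity of $\delta$ bound $\delta(A)$ by $\lambda\,\delta(K) + 0$, and one checks $\delta(K) \le 1$ by the definition of $K$. The reverse inequality---that $A$ can actually be covered by $\delta(A)\cdot K$ after translation---is exactly where the extra hypothesis earns its keep: the translation-alignment condition
\[
\delta((A+a)\cup(B+b)) \le \max\{\delta(A),\delta(B)\}
\]
should, by iterating over the points of $A$ (taking $B$ to be single points or growing the set one point at a time), let me slide $A$ into position inside a common scaled copy of $K$. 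I expect the main obstacle to be establishing existence of the \emph{simultaneous} translate covering all of $A$ at scale $\delta(A)$: the hypothesis supplies only pairwise alignment, so I anticipate needing a compactness or Helly-type argument to pass from pairwise (or incremental) alignment to a single global translate, and verifying that the reconstructed $K$ obtained from pairs is genuinely the body whose scaled translates realize $\delta$ on all finite sets.
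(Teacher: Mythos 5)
Your forward direction is correct and matches the paper's argument: align the two optimal covering translates of $A$ and $B$ at a common center and use $R(\lambda K,K)=\lambda$. The converse, however, has a genuine gap, and it is located exactly where you place the candidate kernel. The set $K=\{x\in\Re^d:\delta(\{0,x\})\le 1\}$ is determined by the induced \emph{metric} alone, but the kernel of a Minkowski diversity is not recoverable from pairs: for a triangle $T\subset\Re^2$, the diversities $R(\cdot,T)$ and $R(\cdot,-T)$ induce the same metric (in both cases $\delta(\{0,x\})\le 1$ iff $x\in T-T$), yet they disagree on triples --- if $V$ is the vertex set of $T$ then $R(V,T)=1$ while $R(V,-T)=2$. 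So no level set of the induced metric can serve as the kernel for both; worse, $\{x:\delta(\{0,x\})\le1\}$ is always centrally symmetric (it equals $K-K$ up to scaling when $\delta=R(\cdot,K)$), so it can never recover an asymmetric kernel such as a simplex, which Theorem~\ref{NegativeType} shows is an essential case. The subsequent "iterate the pairwise alignment over points of $A$" step is also not an argument yet --- you correctly flag that passing from pairwise alignment to a single global translate is the crux, but with the wrong body in hand there is nothing to iterate toward.

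The paper's construction is genuinely different and worth contrasting with your plan. It first extends $\delta$ to a Lipschitz function $\td$ on all convex bodies (via vertex sets of polytopes and Hausdorff limits), upgrades the alignment hypothesis from finite sets to convex bodies by a compactness argument on the translation vectors, and then defines the kernel \emph{globally} as a volume-maximal element $K$ of $\sC=\{L:\td(L)\le 1\}$. Volume-maximality forces inclusion-maximality, and then for any finite $A$ with $\delta(A)=1$ the alignment condition produces $x$ with $\conv(K\cup(\conv(A)+x))\in\sC$; maximality forces this set to equal $K$, i.e.\ $A+x\subseteq K$, which is precisely the "single global translate" you were missing. The reverse inequality $R(A,K)\ge\delta(A)$ then follows from sublinearity exactly as you sketch. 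If you want to salvage your outline, replace the pair-level-set definition of $K$ with an extremal choice over the whole sublevel family $\sC$; the rest of your plan for the easy inequality then goes through.
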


We also show that the last result extends beyond diversities to functions defined on bounded subsets of $\Re^d$.  
\introcounter{GCRcharacterize2}
\begin{introthm}
Let $f$ be a function on bounded subsets of $\Re^d$. Then there is a convex body $K$ such that $f(A) = R(A,K)$ for all bounded $A$ if and only if $f$ is sublinear, monotonic, and  $f$ restricted to finite subsets is a Minkowski diversity.
\end{introthm}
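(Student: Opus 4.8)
The plan is to prove both directions by leveraging Theorem~\ref{GCRcharacterize}, which already characterizes Minkowski diversities on finite subsets. The forward direction (necessity) should be routine: if $f(A) = R(A,K)$ for all bounded $A$, then the sublinearity and monotonicity of the generalized circumradius are essentially geometric facts about containment. For monotonicity, $A \subseteq B$ forces $R(A,K) \le R(B,K)$ since any translate of $\lambda K$ covering $B$ also covers $A$. For sublinearity, the positive homogeneity $R(\lambda A, K) = \lambda R(A,K)$ is immediate from scaling, and subadditivity $R(A+B,K) \le R(A,K) + R(B,K)$ follows because if $A \subseteq \lambda K + x$ and $B \subseteq \mu K + y$ then $A + B \subseteq (\lambda+\mu)K + (x+y)$ using $\lambda K + \mu K = (\lambda+\mu)K$ by convexity. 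Finally, the restriction of $f$ to finite subsets equals $R(\cdot, K)$ restricted to finite subsets, which is a Minkowski diversity by definition.

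\medskip

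For the converse (sufficiency), I would assume $f$ is sublinear, monotonic, and its restriction $f|_{\text{fin}}$ to finite subsets is a Minkowski diversity. By definition there is a convex body $K$ with $f(A) = R(A,K)$ for all finite $A$. The goal is to extend this equality to \emph{all} bounded subsets $A$, using the additional hypotheses that $f$ is sublinear and monotonic as a function on bounded sets. The natural strategy is an approximation argument: for a bounded set $A$, approximate it from inside and outside by finite sets, and use monotonicity to sandwich both $f(A)$ and $R(A,K)$ between the corresponding finite values, which already agree.

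\medskip

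More concretely, I would first establish that $R(A,K) = \sup\{R(F,K) : F \subseteq A,\ F \text{ finite}\}$ for bounded $A$. The inequality $\ge$ is monotonicity of $R(\cdot,K)$; the reverse inequality requires a compactness argument. The key idea is that $R(A,K) = R(\overline{A},K) = R(\conv(\overline A),K)$ (taking closure and convex hull does not change the circumradius, since $\lambda K + x$ is closed and convex), and then that a finite subset $F$ of $A$ whose convex hull approximates $\conv(\overline A)$ well in the Hausdorff metric will have $R(F,K)$ close to $R(A,K)$. I would establish an analogous statement for $f$: namely $f(A) = \sup\{f(F): F \subseteq A \text{ finite}\}$ for bounded $A$. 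Here monotonicity gives $\ge$, and for $\le$ I expect to use sublinearity to control $f$ on the ``remainder'' $A$ relative to a finite approximant $F$—the hope is that sublinearity plus monotonicity force $f$ to be inner-continuous in an appropriate sense. Once both quantities equal the common supremum over finite subsets, and $f(F) = R(F,K)$ on those finite subsets, equality on all bounded $A$ follows.

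\medskip

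The main obstacle will be proving that $f$ is determined by its values on finite subsets, that is $f(A) = \sup\{f(F): F\subseteq A \text{ finite}\}$ for every bounded $A$. Unlike the circumradius, where this follows from concrete geometry and compactness, the abstract function $f$ is constrained only by sublinearity and monotonicity, so I must extract an approximation or continuity property from these axioms alone. One promising route is to use sublinearity to write, for a fine finite cover, $A \subseteq F + \epsilon B$ where $B$ is a small ball and $F$ a finite set of centers, giving $f(A) \le f(F) + \epsilon f(B)$ by subadditivity and monotonicity; letting $\epsilon \to 0$ then yields the bound $f(A) \le \sup_F f(F)$. Making this rigorous—especially ensuring $F$ can be taken as a finite \emph{subset} of $A$ rather than merely nearby, and handling the term $\epsilon f(B)$ uniformly—is where the real work lies.
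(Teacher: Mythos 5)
Your proposal is correct and follows essentially the same route as the paper: the paper's key lemma is exactly your closing estimate (sublinearity plus monotonicity give $f(A)\le f(F+\epsilon\sB)\le f(F)+\epsilon f(\sB)$, i.e.\ Hausdorff continuity), after which one approximates a bounded set by finite $\epsilon$-nets and invokes Theorem~\ref{GCRcharacterize}. The two difficulties you flag are non-issues: $f(\epsilon\sB)=\epsilon f(\sB)$ is a fixed constant times $\epsilon$ by positive homogeneity, and a maximal $\epsilon$-separated subset of $A$ is a finite subset of $A$ itself serving as the net (the paper does not even insist the centers lie in $A$, using two-sided Hausdorff continuity instead).
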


Having characterized which diversities on $\Re^d$ are Minkowski diversities, we turn to the  more difficult problem of characterizing which diversities can be isometrically embedded into Minkowski diversities. An {\em isometric embedding} of a diversity $(X,\delta_1)$ into a diversity $(\Re^d,\delta_2)$ is a map $\phi:X \rightarrow \Re^d$ such that $\delta_1(A) = \delta_2(\phi(A))$ for all finite $A \subseteq X$.  If there is an isometric embedding from a diversity $(X,\delta_1)$ into a Minkowski diversity $(\Re^d,\delta_K)$ for some $d$ and some convex body $K \subseteq \Re^d$ then we say that $(X,\delta_1)$ is  {\em Minkowski-embeddable}.

We provide a complete characterization of Minkowski-embeddability for diversities which are finite and {\em symmetric}, meaning that the diversity of a set is determined by a function of its cardinality.

\introcounter{GCRSymmetric}
\begin{introthm} 
Let $(X,\delta)$ be a finite symmetric diversity. Then $(X,\delta)$ is Minkowski-embeddable if and only if 
         \begin{equation}
              \frac{\delta(A \setminus \{a\})}{\delta(A)} \geq \frac{|A|-2}{|A|-1} \label{symmMink}
          \end{equation}
         for all $A \subseteq X$ with $|A| \geq 2$, $a \in A$.
\end{introthm}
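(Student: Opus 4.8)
The plan is to recast the condition and then split into necessity and sufficiency. Writing $\delta(A)=f(|A|)$, the inequality~\eqref{symmMink} for $|A|=n$ reads $f(n-1)/f(n)\ge (n-2)/(n-1)$, equivalently $(n-1)f(n-1)\ge(n-2)f(n)$, equivalently the sequence $g(n):=f(n)/(n-1)$ is non-increasing in $n$. (For $|A|=2$ it holds trivially since $f(1)=0$.) So I want to show that a finite symmetric diversity is Minkowski-embeddable if and only if $n\mapsto f(n)/(n-1)$ is non-increasing.

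\emph{Necessity.} I would prove the purely geometric inequality that for every convex body $K$, every $n\ge 3$, and all points $p_1,\dots,p_n$,
\[ R(\{p_1,\dots,p_n\},K)\ \le\ \tfrac{n-1}{n-2}\,\max_{1\le j\le n} R(\{p_i:i\ne j\},K). \]
Applying this to $P=\phi(A)$ for an embedding $\phi$ and using symmetry gives $f(n)\le\frac{n-1}{n-2}f(n-1)$, which is~\eqref{symmMink}. To prove the inequality, set $\lambda=\max_j R(\{p_i:i\ne j\},K)$ and choose for each $j$ a center $x_j$ with $p_i-x_j\in\lambda K$ for all $i\ne j$ (the infimum is attained since $K$ is compact). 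Put $\bar p=\frac1n\sum_i p_i$ and $\bar x=\frac1n\sum_j x_j$. The only term I cannot control directly is the diagonal $p_i-x_i$; but since $x_i$ covers every $p_{i'}$ with $i'\ne i$, averaging those memberships gives $\frac{n\bar p-p_i}{n-1}-x_i\in\lambda K$, hence $p_i-x_i\in\frac{n}{n-1}(p_i-\bar p)+\lambda K$. Combining this with $\sum_{j\ne i}(p_i-x_j)\in(n-1)\lambda K$ and using $aK+bK=(a+b)K$ for convex $K$, I obtain $p_i-\bar x\in \lambda K+\frac1{n-1}(p_i-\bar p)$ for every $i$. Rearranging, $\frac{n-2}{n-1}p_i\in\big(\bar x-\frac1{n-1}\bar p\big)+\lambda K$, so all $p_i$ lie in a common translate of $\frac{n-1}{n-2}\lambda K$, which is the claim.

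\emph{Sufficiency.} Assuming $g(n)=f(n)/(n-1)$ is non-increasing, I would build an explicit embedding into a symmetric kernel. Map $X=\{x_1,\dots,x_N\}$ to the centred regular-simplex vertices $v_i=e_i-\frac1N\mathbf 1$ in $V=\{x\in\Re^N:\sum_i x_i=0\}$, and take $K=\conv\big(\bigcup_{n=2}^N \mathrm{Orbit}_{S_N}(u_n)\big)$, where $u_n=w_n/f(n)$ and $w_n=e_i-\frac1n\sum_{j\in S}e_j$ is the displacement of a vertex from the centroid of an $n$-subset $S\ni i$; by construction $K$ is a symmetric convex body in $V$. Because $K$ is invariant under coordinate permutations, for any $S$ the optimal translate in $R(\{v_i:i\in S\},K)$ may be taken invariant under $\mathrm{Sym}(S)$, which reduces the computation to the gauge of $u_{|S|}$; the value equals $f(|S|)$ precisely when each $u_n$ lies on $\partial K$, that is, is not a convex combination of the remaining orbit points. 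The remaining task is to certify this extreme-point property from the hypothesis, which I would do by exhibiting for each $n$ a supporting linear functional $\theta_k$ of ``sum of the $k$ largest coordinates'' type whose maximum over all orbit generators is attained at $\mathrm{Orbit}(u_n)$; comparing $\theta_k$ on generators $u_m$ turns the required inequalities into exactly the monotonicity of $g$ together with the monotonicity and subadditivity of $f$ supplied by (D1)--(D2).

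The hard part is this sufficiency verification, and two points need care. First, because the kernel is only permutation-symmetric and not centrally symmetric, the optimal translate for a subset $S$ is not forced to be the centroid but ranges over the one-parameter family fixed by $\mathrm{Sym}(S)$, so the gauge computation carries an extra optimization along the axis joining the centroids of $S$ and its complement. Second, establishing that \emph{every} $u_n$ (and not merely those for extreme values of $n$) is a vertex of $K$ requires the case analysis of the $\theta_k$ comparison, and it is here that the precise bound $\frac{n-2}{n-1}$, rather than any weaker ratio, is forced. An alternative organisation that may simplify the bookkeeping is to write $f=\max_t m_t$ with $m_t(n)=f(t)\min\{1,\frac{n-1}{t-1}\}$ (each $m_t$ still having $m_t(n)/(n-1)$ non-increasing), realise each $m_t$ by a single simple kernel, and combine them using the fact that a Cartesian product of kernels realises the pointwise maximum of the corresponding diversities.
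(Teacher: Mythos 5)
Your ``only if'' direction is correct and complete, and it takes a genuinely different, more self-contained route than the paper: you prove directly the pointwise bound $R(\{p_1,\dots,p_n\},K)\le\frac{n-1}{n-2}\max_{j}R(\{p_i:i\ne j\},K)$ by averaging the $n$ centres and controlling the diagonal term $p_i-x_i$ through the membership $\frac{n\bar{p}-p_i}{n-1}-x_i\in\lambda K$; the algebra checks out ($n(p_i-\bar{x})\in n\lambda K+\frac{n}{n-1}(p_i-\bar{p})$, hence $\frac{n-2}{n-1}p_i\in\bar{x}-\frac{1}{n-1}\bar{p}+\lambda K$ for every $i$). The paper instead imports the averaged inequality of Proposition~\ref{BrandenbergBound} from Brandenberg and K\"onig and applies it via Proposition~\ref{prop:utility}(c); your max-form argument replaces that citation with an elementary computation. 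Your reformulation of \eqref{symmMink} as ``$f(n)/(n-1)$ is non-increasing'' is also right.

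The ``if'' direction, however, has a genuine gap, and it sits exactly where you defer the work. Constructing a kernel for a symmetric $f$ with $f(n)/(n-1)$ non-increasing is the entire content of the theorem, and neither of your routes closes it. For the single permutation-symmetric kernel $K=\conv\bigl(\bigcup_n\mathrm{Orbit}(u_n)\bigr)$ you still owe (i) the exact evaluation of $R(\{v_i:i\in S\},K)$ --- boundary membership of $u_{|S|}$ yields only one inequality, and because $K$ is not centrally symmetric the optimal translate is pinned down only to a one-parameter family --- and (ii) the verification that every $u_n$ is extreme in $K$, which is where the constant $\frac{n-2}{n-1}$ would have to enter; neither is routine. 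Your fallback $f=\max_t m_t$ with $m_t(n)=f(t)\min\{1,\frac{n-1}{t-1}\}$ is a correct identity (monotonicity of $f$ handles $t\le n$, monotonicity of $f(t)/(t-1)$ handles $t>n$), and taking maxima via Cartesian products of kernels is legitimate (Proposition~\ref{times1}); but the truncated-linear diversity $m_t$ for $2<t<|X|$ is \emph{not} one of the easy building blocks --- it is itself a nontrivial instance of the theorem, and no kernel for it is exhibited. It cannot live in $\Re^{t-1}$, where the Helly-type capping would be automatic: already for $t=3$ that would require $|X|$ points of the plane pairwise equidistant in the gauge of $K-K$, impossible for $|X|\ge 5$. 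The paper supplies the missing idea: induct on $m$, and write $\delta$ restricted to $X_m$ as the maximum of the $m-1$ \emph{semi}diversities obtained by collapsing $x_m$ onto each $x_i$ inside the inductive embedding of $X_{m-1}$, together with the single fully linear diversity $\frac{|A|-1}{m-1}f(m-1)$; the hypothesis \eqref{symmMink} enters only in checking that this maximum equals $f(|A|-1)$. That collapsing construction is what lets one avoid ever realizing a truncated-linear symmetric diversity by a single kernel.
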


A consequence of the theorem is that, even though any diversity on three elements is Minkowski-embeddable, there exist diversities on four elements which are {\em not}.

We then investigate which finite diversities $(X,\delta_1)$ can be embedded into the diversity $(\Re^d,\delta_2)$ when $\delta_2$ is a Minkowski diversity with kernel $K$ restricted to a particular class. A diversity $(X,\delta)$ is a {\em diameter diversity} if $\delta(A) = \max\{\delta(\{a,a'\}):a,a' \in A\}$ for all finite $A \subseteq X$, see \cite{BryantTupper12}. The following characterization for when $K$ is a cube (or non-degenerate transform of a cube) follows from an observation of \cite{BrandenbergKonig13}.

\introcounter{DCMDiameter}
\begin{introthm}
A finite diversity $(X,\delta)$ can be embedded in a Minkowski diversity with kernel equal to some parallelotope if and only if $(X,\delta)$ is a diameter diversity.
\end{introthm}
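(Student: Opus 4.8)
The plan is to reduce everything to the single observation that the generalized circumradius with respect to a box is, up to the factor $\half$, the diameter in the $\ell_\infty$ norm, and then to invoke the classical Kuratowski (Fréchet) embedding of a finite metric space into $\ell_\infty$. First I would record how $R(\cdot,K)$ behaves under affine maps. Translating the kernel can be absorbed into the translate $x$ in the definition of $R$, and $A \subseteq \lambda M K_0 + x$ is equivalent to $M^{-1}A \subseteq \lambda K_0 + M^{-1}x$; so for invertible linear $M$ and $K_0 = [-1,1]^d$ we have $R(A, M K_0 + v) = R(M^{-1}A, K_0)$. A direct computation on the box shows that a translate of $\lambda K_0 = [-\lambda,\lambda]^d$ contains $A$ exactly when $\lambda$ dominates half the coordinate-wise range of $A$ in every coordinate, so that
\[ R(A, [-1,1]^d) = \half \max_i \max_{a,a' \in A}|a_i - a'_i| = \half \max_{a,a' \in A}\|a-a'\|_\infty. \]
This is the observation of \cite{BrandenbergKonig13}. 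Combining the two facts, for any parallelotope $K = MK_0 + v$ the Minkowski diversity $\delta_K$ satisfies $\delta_K(A) = \half\max_{a,a' \in A}\|M^{-1}(a-a')\|_\infty$, which depends only on the pairs in $A$; hence $\delta_K$ restricted to any finite set is a diameter diversity.

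For the ``only if'' direction I would use that the diameter property is inherited through isometric embeddings. If $\phi:(X,\delta) \to (\Re^d,\delta_K)$ is an isometric embedding and $\delta_K$ is a diameter diversity, then for every finite $A \subseteq X$,
\[ \delta(A) = \delta_K(\phi(A)) = \max_{a,a' \in A}\delta_K(\{\phi(a),\phi(a')\}) = \max_{a,a' \in A}\delta(\{a,a'\}), \]
so $(X,\delta)$ is a diameter diversity.

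For the ``if'' direction, let $(X,\delta)$ be a finite diameter diversity with induced metric $d(a,a') = \delta(\{a,a'\})$ and enumerate $X = \{x_1,\dots,x_n\}$. The Kuratowski embedding $\phi:X \to \Re^n$ with $\phi(x)_i = d(x,x_i)$ is isometric into $\ell_\infty^n$: the triangle inequality gives $\|\phi(x)-\phi(y)\|_\infty \le d(x,y)$, while the coordinate $i$ with $x_i = y$ attains equality. Choosing the kernel $K = \half[-1,1]^n$, a cube and hence a parallelotope (here $M = \half I$, $M^{-1} = 2I$), the formula above gives $\delta_K(A) = \max_{a,a' \in A}\|a - a'\|_\infty$, so that for every finite $A \subseteq X$,
\[ \delta_K(\phi(A)) = \max_{a,a' \in A}\|\phi(a)-\phi(a')\|_\infty = \max_{a,a' \in A}d(a,a') = \delta(A), \]
where the last equality uses that $(X,\delta)$ is a diameter diversity. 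Thus $\phi$ is an isometric embedding of $(X,\delta)$ into a Minkowski diversity with parallelotope kernel, and the dimension $n = |X|$ is permitted by the definition of Minkowski-embeddability.

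The main obstacle---really the only nonroutine point---is the circumradius-to-diameter identity for boxes and keeping the scaling constant consistent: the factor $\half$ in that identity forces the choice $K = \half[-1,1]^n$ rather than the unit cube. Everything else is the standard preservation of the diameter property under isometries together with the classical $\ell_\infty$ embedding of finite metric spaces.
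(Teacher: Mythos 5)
Your proof is correct and follows essentially the same route as the paper: the Fr\'echet embedding into $\ell_\infty^n$ with a cube kernel for the ``if'' direction, and the identification of the box circumradius with the $\ell_\infty$ diameter for the ``only if'' direction. The only difference is that you derive the latter observation from scratch (via affine invariance of $R$ and the explicit box computation), whereas the paper simply cites Proposition 3.4 of Brandenberg--K\"onig; your version is self-contained and your bookkeeping of the factor $\half$ is consistent.
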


The case when $K$ is a simplex is more complex. We say that a finite diversity $(X,\delta)$ is of {\em negative type} if 
\[\sum_{A \neq \emptyset} \sum_{B \neq \emptyset} x_{A} x_{B} \delta(A \cup B)  \leq 0\]
for all zero-sum vectors $x$ indexed by non-empty subsets of $X$.  Diversities of negative type are analogous to metrics of negative type, and several of the properties of metrics of negative type extend to diversities of negative type, see  \cite{WuBryantEtal19}. For negative-type diversities we prove the following characterization.

\introcounter{NegativeType}
\begin{introthm}
A finite diversity $(X,\delta)$ can be embedded in a Minkowski diversity with kernel equal to some simplex if and only if $(X,\delta)$ has negative type.
\end{introthm}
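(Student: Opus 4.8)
The plan is to reduce the geometric condition to a purely combinatorial one and then recognize ``simplex-embeddable'' and ``negative type'' as literally the \emph{same} polyhedral cone, linked by one Möbius-type identity. First I would normalize using affine covariance: since every simplex is an affine image of the standard simplex and $R(A,MK+b)=R(M^{-1}(A-b),K)$, the diversity $(X,\delta)$ embeds in some simplex kernel if and only if there are finitely many functions $f_i:X\to\Re$ and weights $\beta_i>0$ with $\sum_i\beta_i f_i\equiv 0$ (forcing singletons to vanish) and $\delta(A)=\sum_i\beta_i\max_{a\in A}f_i(a)$. I would derive this by writing the simplex as $\{x:\langle u_i,x\rangle\le c_i\}$, minimizing the scale over admissible translations via LP duality through the unique positive dependence $\sum_i\alpha_i u_i=0$ of the facet normals, and conversely realizing any balanced family $\{f_i,\beta_i\}$ as a genuine simplex normal fan inside the hyperplane $\{v:\sum_i\beta_i v_i=0\}$. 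Necessity is then immediate: each $A\mapsto\max_{a\in A}f(a)$ is conditionally negative definite since $\max(s,t)=\tfrac12(s+t)+\tfrac12|s-t|$ and $|s-t|$ pulls back the negative-type kernel on $\Re$, so a positive combination is of negative type, a property preserved under isometric embedding.

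For the converse I would pass to the elementary hitting functions $h_T(A)=\mathbf{1}[A\cap T\neq\emptyset]$ via a layer-cake decomposition of each $\max_{a\in A}f_i$. Because $X$ is finite there are only finitely many $h_T$, and the class $\mathcal{S}$ of simplex-embeddable diversities becomes exactly $\{\,\delta(A)=C+\sum_T w_T h_T(A): w_T\ge0,\ C+\sum_{T\ni x}w_T=0\ \forall x\,\}$. Thus $\mathcal{S}$ is a \emph{polyhedral} cone and membership is a linear feasibility problem.

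By Farkas' lemma, $\delta\in\mathcal{S}$ iff $\sum_A y_A\delta(A)\ge0$ for every $y$ in the test cone $\{y:\sum_A y_A=0,\ \sum_{A\cap T\neq\emptyset}y_A\ge0\ \forall T\}$. Writing $G_y(Q)=\sum_{A\subseteq Q}y_A$, this cone is $\{G_y(Q)\le0\ \forall Q\subsetneq X,\ G_y(X)=0\}$, a shifted orthant whose extreme rays are $y=-x^{(Q_0)}$ where $x^{(Q_0)}_A=(-1)^{|A|-|Q_0|}\mathbf{1}[Q_0\subseteq A]$. The crux is the identity that each $x^{(Q_0)}$ is zero-sum and, setting $w_x(S)=\sum_{A\cup B=S}x_A x_B$, that $w_{x^{(Q_0)}}=x^{(Q_0)}$; this reduces to the elementwise count $\sum_{A'\cup B'=S'}(-1)^{|A'|+|B'|}=(-1)^{|S'|}$. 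Consequently, testing $\delta$ against these extreme rays is precisely the family of negative-type inequalities $\sum_{A,B}x^{(Q_0)}_A x^{(Q_0)}_B\,\delta(A\cup B)\le0$, so $\delta$ of negative type passes every test and hence lies in $\mathcal{S}$. The companion identity $G_{w_x}(Q)=\big(\sum_{A\subseteq Q}x_A\big)^2\ge0$ shows conversely that every negative-type form $w_x$ lies in the test cone, so the two cones coincide and both implications fall out together.

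I expect the genuinely delicate point to be the first paragraph's reduction: checking that an \emph{arbitrary} balanced family $\{f_i,\beta_i\}$ (including constant $f_i$) is realized by an honest full-dimensional simplex, and that the layer-cake accounting of the additive constant $C$ stays consistent with the vanishing of $\delta$ on singletons. The combinatorial heart---the identity $w_{x^{(Q_0)}}=x^{(Q_0)}$ together with the recognition that the $x^{(Q_0)}$ are exactly the extreme rays of the Farkas test cone---is short once the setup is in place, and it is this coincidence of the polyhedral test rays with the negative-type forms $w_x$ that makes the characterization exact rather than merely an inclusion.
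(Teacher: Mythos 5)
Your proposal is correct in outline but follows a genuinely different, essentially self-contained route. The paper's proof is short because it uses Theorem 7 of \cite{WuBryantEtal19} as a black box: that result already identifies the negative-type diversities with those embeddable in $(\Re^d,\delta_{neg})$, so the paper only needs to check that $\delta_{neg}(A)=R(A,K)$ for the one simplex $K=-\conv(0,e_1,\ldots,e_d)$ and then invoke Proposition~\ref{prop:utility}(b) together with the affine equivalence of all simplices. You instead treat an arbitrary simplex directly, using the unique positive dependence of its facet normals and LP duality to obtain $R(A,K)=\sum_i\beta_i\max_{a\in A}f_i(a)$ with $\sum_i\beta_i f_i\equiv 0$ (which specializes exactly to $\delta_{neg}$ for the paper's $K$), and then you re-derive the content of the cited theorem by the layer-cake/Farkas/M\"obius argument. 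I checked the combinatorial heart: the extreme rays of your test cone are indeed the $-x^{(Q_0)}$, the identity $w_{x^{(Q_0)}}=x^{(Q_0)}$ follows from the elementwise count $(-1)+(-1)+1=-1$, and $G_{w_x}(Q)=G_x(Q)^2$ gives the reverse inclusion; the necessity argument via $\max(s,t)=\tfrac12(s+t)+\tfrac12|s-t|$ is also sound. What your route buys is independence from the external reference and an explanation of \emph{why} the two cones coincide; what it costs is length and the two points you rightly flag as delicate: (i) realizing an arbitrary balanced family as the facet data of an honest full-dimensional simplex (this does work---the coordinate functionals restricted to the hyperplane $\sum_i\beta_i v_i=0$ positively span its dual precisely because the dependence is strictly positive---but the degenerate cases of constant $f_i$ or a single summand need a sentence), and (ii) the bookkeeping of the additive constant $C$ against vanishing on singletons. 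Neither is a gap, but both must be written out for the argument to be complete.
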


Significantly, the set of diversities with negative type is quite large, with the same dimension as the set of diversities on $X$. The result shows that the class of Minkowski-embeddable diversities is even larger, potentially opening up possibilities for quite general theoretical and algorithmic results.

\section{The generalized circumradius}

In this section we collect together a number of fundamental results about the generalized circumradius. We begin with several  observations from \cite{Jahn17} (Proposition 3.2). Let $\conv(A)$ denote the convex hull of $A$.

\begin{prop} \label{jahn1} Let $A,A',B$ be bounded subsets of  $\Re^d$,  let $K,K'$ be convex bodies in $\Re^d$. Then
\begin{enumerate}
\item[(a)] If $A \subseteq A'$ and $K' \subseteq K$ then $R(A,K) \leq R(A',K')$.
\item[(b)] If $\conv(A) = \conv(A')$ then $R(A,K) = R(A',K)$.
\item[(c)] $R(A+B,K) \leq R(A,K) + R(B,K)$ with equality if  $B = \alpha K$ for some $\alpha \geq 0$.
\item[(d)] $R(A+x,K+y) = R(A,K)$  for all $x,y \in \Re^d$.
\item[(e)] $R(\alpha A, \beta K) = (\alpha/\beta) R(A,K)$ for $\alpha,\beta>0$. 
\end{enumerate}
\end{prop}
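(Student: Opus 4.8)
The plan is to read every part off the feasibility set $\Lambda(A,K) = \{\lambda \ge 0 : A \subseteq \lambda K + x \text{ for some } x \in \Re^d\}$, whose infimum is $R(A,K)$. One preliminary observation does most of the work: since $\lambda K + x$ is convex, $A \subseteq \lambda K + x$ holds if and only if $\conv(A) \subseteq \lambda K + x$. Hence $\Lambda(A,K)$ depends on $A$ only through $\conv(A)$, which is precisely (b). I would also record two facts used repeatedly: the identity $sK + tK = (s+t)K$ for $s,t \ge 0$ and convex $K$, and the normalisation $R(K,K)=1$ (the containment $K \subseteq \lambda K + x$ forces $\lambda \ge 1$ by comparing volumes).

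For (a), the point is that $K' \subseteq K$ and $\lambda \ge 0$ give $\lambda K' \subseteq \lambda K$, so any translate of $\lambda K'$ covering $A'$ satisfies $A \subseteq A' \subseteq \lambda K' + x \subseteq \lambda K + x$; thus $\Lambda(A',K') \subseteq \Lambda(A,K)$ and the infima compare the right way. Parts (d) and (e) are pure bookkeeping on $\Lambda$: the containment $A + x \subseteq \lambda(K+y) + z$ rearranges to $A \subseteq \lambda K + (z + \lambda y - x)$, so translating $A$ and $K$ merely reindexes the admissible centres and leaves $\Lambda$ unchanged, giving (d); and $\alpha A \subseteq \lambda \beta K + z$ rearranges to $A \subseteq (\lambda\beta/\alpha)K + z/\alpha$, so $\lambda \in \Lambda(\alpha A, \beta K)$ iff $(\beta/\alpha)\lambda \in \Lambda(A,K)$, and taking infima yields (e).

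The inequality in (c) is immediate: if $A \subseteq \lambda K + x$ and $B \subseteq \mu K + y$ then $A + B \subseteq \lambda K + \mu K + (x+y) = (\lambda+\mu)K + (x+y)$, so $\lambda+\mu \in \Lambda(A+B,K)$, and minimising over $\lambda,\mu$ gives $R(A+B,K) \le R(A,K)+R(B,K)$. The equality claim for $B = \alpha K$ is the one genuinely delicate point. Its upper bound follows from subadditivity together with $R(\alpha K,K) = \alpha$ (a case of (e)). For the matching lower bound I would take any $\mu \in \Lambda(A+\alpha K, K)$, say $A + \alpha K \subseteq \mu K + x$, and show $\mu \ge R(A,K) + \alpha$. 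First, picking $a \in A$ gives $\alpha K \subseteq \mu K + (x-a)$, and a volume comparison (the volume of $\lambda K$ scales as $\lambda^d$) forces $\mu \ge \alpha$. This licenses writing $\mu K = (\mu-\alpha)K + \alpha K$, so that $A + \alpha K \subseteq \big[(\mu-\alpha)K + x\big] + \alpha K$; the R\aa dstr\"om cancellation law (applicable since $\alpha K$ is nonempty and bounded and $(\mu-\alpha)K + x$ is closed and convex) then yields $A \subseteq (\mu-\alpha)K + x$. Hence $\mu-\alpha \in \Lambda(A,K)$, so $\mu \ge R(A,K) + \alpha$, and infimising over $\mu$ completes the equality.

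The main obstacle is exactly this cancellation step: subadditivity alone cannot detect equality, and one needs a genuine cancellation principle for Minkowski sums of convex bodies. An equivalent route worth noting is the support-function description, where $A \subseteq \lambda K + x$ becomes pointwise domination of support functions and $h_{A+\alpha K} = h_A + \alpha h_K$ makes the subtraction of $\alpha K$ transparent; either way, every remaining part of the proposition reduces to manipulating the single feasibility condition $A \subseteq \lambda K + x$.
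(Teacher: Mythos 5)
Your proof is correct. Note, however, that the paper does not actually prove this proposition: it imports all five parts as ``observations from \cite{Jahn17} (Proposition 3.2)'' without argument, so there is no internal proof to compare against, and your write-up serves as a self-contained verification. Everything you do is sound: parts (a), (b), (d) and (e) really are pure bookkeeping on the feasibility set $\Lambda(A,K)$, and your reduction of (b) to convexity of $\lambda K + x$ is exactly the standard one. The only genuinely nontrivial point is the equality case of (c), and you have correctly identified that subadditivity alone cannot give it. Your chain --- volume comparison to get $\mu \ge \alpha$, the decomposition $\mu K = (\mu-\alpha)K + \alpha K$ (valid because $K$ is convex and $\mu - \alpha \ge 0$), and then R{\aa}dstr\"{o}m cancellation with $C = \alpha K$ nonempty and bounded and $(\mu-\alpha)K + x$ compact convex, hence closed convex --- is a complete and correct argument; the degenerate case $\alpha = 0$ is trivial since then $\alpha K = \{0\}$. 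One could shorten this step via support functions, as you note ($h_{\mu K + x} \ge h_{A} + \alpha h_{K}$ pointwise gives $h_{(\mu-\alpha)K+x} \ge h_{A}$ directly), but the cancellation route is equally rigorous. No gaps.
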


An indirect consequence of Helly's theorem (see e.g. \cite{DanzerGrunbaumEtal63}, Section 6.2) is that for bounded $A \subseteq \Re^d$ and a convex body $
K$ we can find a small subset $A' \subseteq A$ such that $|A'| \leq d+1$ and $R(A',K) = R(A,K)$. The following more general result forms one part of Theorem 1.2 in \cite{BrandenbergKonig13}.

\begin{prop} \label{coreset}
Suppose that $A \subset \Re^d$ is bounded and $K$ is a convex body. For all $\epsilon \geq 0$ there exists $A' \subseteq A$ such that $|A'| \leq \left \lceil \frac{d}{1+\epsilon} \right \rceil + 1$ and
\[R(A',K) \leq R(A,K) \leq (1+\epsilon) R(A',K).\]
\end{prop}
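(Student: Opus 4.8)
The plan is to reduce everything to Helly's theorem by reformulating containment as an intersection. Since $a \in \lambda K + x$ is equivalent to $x \in a - \lambda K$, a translate $\lambda K + x$ covers $A$ precisely when $\bigcap_{a \in A}(a - \lambda K) \neq \emptyset$, so $R(A,K) = \inf\{\lambda \ge 0 : \bigcap_{a\in A}(a-\lambda K)\neq\emptyset\}$. By Proposition~\ref{jahn1}(d) we may translate $K$ so that $0$ lies in its interior; then the compact convex sets $a - \lambda K$ increase with $\lambda$, nonemptiness of the intersection is upward-closed in $\lambda$, and the infimum is attained. This already yields the case $\epsilon = 0$: for $\lambda$ just below $R(A,K)$ the full intersection is empty, so by Helly's theorem applied to the compact convex family $\{a-\lambda K\}_{a\in A}$ some subfamily of size at most $d+1$ has empty intersection, and a short compactness argument then produces $A' \subseteq A$ with $|A'| \le d+1$ and $R(A',K) = R(A,K)$.

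For general $\epsilon$ I would read the claim as a quantitative Helly statement for homothets. Writing $\lambda_0 = R(A,K)/(1+\epsilon)$, the family $\{a - \lambda_0 K\}_{a\in A}$ has empty intersection, yet inflating each member by the factor $1+\epsilon$ about its own centre $a$ sends $a - \lambda_0 K$ to $a - R(A,K)K$ and hence restores a common point (the optimal centre at scale $R(A,K)$). The goal becomes to exhibit an already-empty subfamily of size only $\lceil d/(1+\epsilon)\rceil + 1$. First normalise so that $R(A,K)=1$ with optimal centre $0$, giving $A \subseteq K$, and pass to a minimal configuration of $m \le d+1$ contact points $a_1,\dots,a_m \in \partial K$. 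From optimality (via LP duality, or a supporting-hyperplane plus Carath\'eodory argument) extract outer normals $u_i$ with $\langle a_i,u_i\rangle = h_K(u_i)$ and multipliers $\mu_i \ge 0$ satisfying $\sum_i \mu_i u_i = 0$. If a subset $S$ of the contacts admits an $x$ with $a_i \in x + \tfrac{1}{1+\epsilon}K$ for all $i \in S$, then testing against $u_i$ forces $\langle x,u_i\rangle \ge \tfrac{\epsilon}{1+\epsilon}h_K(u_i)$ on $S$, which, set against the balance $\sum_i \mu_i \langle x,u_i\rangle = 0$, limits how much weight $S$ can carry. Optimising over which $d-k$ contacts (with $k = \lceil d/(1+\epsilon)\rceil$) to discard — discarding those of smallest multiplier — is what converts this into the stated bound on $|A'|$.

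The hard part is the sharp bookkeeping in this last step: controlling the contribution of the discarded normals to $\sum_i \mu_i \langle x, u_i\rangle$. A naive estimate that merely places $x$ somewhere in $K - \tfrac{1}{1+\epsilon}K$ leaks the asymmetry of $K$ through terms $h_K(-u_i)$ and so fails to give a constant independent of $K$; one must instead exploit that a single centre $x$ simultaneously encloses all of $S$. This is precisely the technical core of Theorem~1.2 of \cite{BrandenbergKonig13}, so I would either reconstruct their Carath\'eodory-with-slack counting or, for brevity, invoke that result directly, since the reduction above exhibits our statement as a special case of it.
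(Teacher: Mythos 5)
The paper offers no proof of this proposition at all: it is quoted verbatim as ``one part of Theorem 1.2 in \cite{BrandenbergKonig13}'', and your proposal, after a correct Helly-type reformulation ($\lambda K+x\supseteq A$ iff $\bigcap_{a\in A}(a-\lambda K)\neq\emptyset$) that handles $\epsilon=0$, explicitly defers the quantitative core to that same theorem --- so the two ``proofs'' coincide in substance. One minor caveat worth noting: for bounded but non-closed $A$ your compactness argument only produces the exact core-set $A'$ inside $\overline{A}$ rather than $A$ (and indeed the $\epsilon=0$ case of the statement as quoted is delicate for non-closed $A$), but this is an issue with the proposition as transcribed rather than with your argument.
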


Note that for particular choices of $K$ there can be much smaller bounds on $|A'|$. For example, when $K$ is the Euclidean ball in $\Re^d$ we have for all bounded $A \subset \Re^d$ and $\epsilon>0$  that there is a subset $A' \subseteq A$ such that $R(A,K) \leq  (1+\epsilon) R(A',K)$ and 
\[|A'| \leq \left \lceil \frac{1}{2\epsilon+\epsilon^2} \right \rceil + 1.\]
This bound is independent of the dimension $d$.

We will make use of the following general property for Minkowski addition which is established during the proof of Theorem 4.1 in  \cite{BrandenbergKonig13}. Let $A \subseteq \Re^d$ be any  set with cardinality $k+1$, $k \ge 2$,  and zero sum.  Then 
\[A \subseteq \frac{k}{(k+1)(k-1)} \sum_{a \in A} \conv(A \setminus \{a\}).\]
Combining this observation with Proposition~\ref{jahn1} (d), (b) and (c) we  have
\begin{prop} \label{BrandenbergBound}
Suppose $A \subset \Re^d$, $|A|=k+1$ and $K$ is a convex body. Then 
\[R(A,K) \leq     \frac{k}{(k+1)(k-1)} \sum_{a \in A} R(A \setminus \{a\},K).\]
\end{prop}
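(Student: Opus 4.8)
The plan is to derive the bound by specializing the Minkowski-sum containment stated just before Proposition~\ref{BrandenbergBound} to the set $A$ and then pushing it through the relevant parts of Proposition~\ref{jahn1}. The quoted containment requires a zero-sum set, while the statement we wish to prove makes no such assumption, so the first step is to reduce to the zero-sum case by translation. Let $c$ be the centroid (or any convenient point) of $A$ and set $A_0 = A - c$, so that $A_0$ has zero sum and cardinality $k+1$. By Proposition~\ref{jahn1}(d) we have $R(A,K) = R(A_0,K)$, and likewise $R(A \setminus \{a\}, K) = R(A_0 \setminus \{a-c\}, K)$ for each $a \in A$, so it suffices to prove the inequality for $A_0$; I will work with $A_0$ from here on and drop the subscript.

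Next I would apply the containment
\[
A \subseteq \frac{k}{(k+1)(k-1)} \sum_{a \in A} \conv(A \setminus \{a\})
\]
together with monotonicity of $R$ in its first argument (Proposition~\ref{jahn1}(a), taking $K' = K$). This gives
\[
R(A,K) \leq R\!\left( \frac{k}{(k+1)(k-1)} \sum_{a \in A} \conv(A \setminus \{a\}),\, K \right).
\]
The scalar factor can be pulled out using Proposition~\ref{jahn1}(e) with $\beta = 1$, turning the right-hand side into $\frac{k}{(k+1)(k-1)}$ times the circumradius of the Minkowski sum $\sum_{a \in A} \conv(A \setminus \{a\})$. Then the subadditivity over Minkowski sums, Proposition~\ref{jahn1}(c) applied inductively across the $k+1$ summands, bounds this by $\frac{k}{(k+1)(k-1)} \sum_{a \in A} R(\conv(A \setminus \{a\}), K)$.

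The final step is to replace each $R(\conv(A \setminus \{a\}), K)$ by $R(A \setminus \{a\}, K)$, which is immediate from Proposition~\ref{jahn1}(b) since a set and its convex hull have the same convex hull. Combining these three reductions yields exactly
\[
R(A,K) \leq \frac{k}{(k+1)(k-1)} \sum_{a \in A} R(A \setminus \{a\}, K),
\]
as required. I do not anticipate any serious obstacle: the only point needing mild care is the bookkeeping in the translation step, ensuring that the deleted point $a-c$ genuinely corresponds to deleting $a$ from the original $A$ and that the cardinalities line up; every analytic ingredient is already packaged in Proposition~\ref{jahn1} and in the pre-stated containment, so the argument is essentially a clean chaining of parts (a)--(e) in the right order.
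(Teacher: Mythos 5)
Your proof is correct and takes essentially the same approach as the paper, which likewise obtains the bound by combining the quoted zero-sum containment with Proposition~\ref{jahn1}~(d), (b) and (c) (you simply make the implicit uses of parts~(a) and~(e) explicit). One small caution: the translation must be by the centroid specifically --- an arbitrary ``convenient point'' would not make $A_0$ zero-sum, which the containment requires.
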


For $C \subseteq \Re^p \times \Re^q$ we define the two projection operators
\begin{align*}
\pi_1(C) & = \{c_1 \in \Re^p : (c_1,c_2) \in C \mbox{ for some $c_2$}\} \\
\pi_2(C) & = \{c_2 \in \Re^q : (c_1,c_2) \in C \mbox{ for some $c_1$} \}.
\end{align*}
The following result will be useful for questions regarding embeddings.

\begin{prop} \label{times1}
Let $A$ be a bounded subset of $\Re^p \times \Re^q$. If $B$ and $C$ are convex bodies in $\Re^p$ and $\Re^q$ respectively, then 
\[ R(A,B \times C) = \max\{R(\pi_1(A),B), R(\pi_2(A),C)\}.\]
\end{prop}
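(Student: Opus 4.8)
The plan is to reduce the containment of $A$ in a translated scaling of the product body $B \times C$ to two independent containment conditions, one for each projection. First I would note that $B \times C$ is itself a convex body in $\Re^p \times \Re^q = \Re^{p+q}$: it is compact as a product of compacts, convex as a product of convex sets, and has nonempty interior, since $\mathrm{int}(B)\times\mathrm{int}(C)$ is a nonempty open subset. Hence $R(A, B\times C)$ is well-defined in the first place.

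The crux is a set-theoretic identity. For any $\lambda \ge 0$ and any translation vector $x = (x_1,x_2) \in \Re^p\times\Re^q$ we have
\[\lambda(B\times C) + x = (\lambda B + x_1)\times(\lambda C + x_2),\]
and an arbitrary set is contained in a Cartesian product $S_1\times S_2$ if and only if its first projection lies in $S_1$ and its second projection lies in $S_2$. Applying this to $A$, the condition $A \subseteq \lambda(B\times C) + x$ is equivalent to $\pi_1(A)\subseteq \lambda B + x_1$ together with $\pi_2(A)\subseteq \lambda C + x_2$. The key point is that the two components $x_1$ and $x_2$ of the translation may be chosen independently, so $\lambda$ admits a covering translate of $A$ by $\lambda(B\times C)$ if and only if $\lambda$ admits a covering translate of $\pi_1(A)$ by $\lambda B$ and, separately, a covering translate of $\pi_2(A)$ by $\lambda C$. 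Writing $\Lambda$, $\Lambda_1$, $\Lambda_2$ for the three sets of feasible $\lambda$, this says $\Lambda = \Lambda_1 \cap \Lambda_2$.

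It remains to pass from this identity of feasible sets to the claimed identity of infima. The only point requiring an argument is that each feasible set is upward closed: if $\pi_1(A)\subseteq \lambda B + x$ and $\mu \ge \lambda$, then $\pi_1(A)$ is still covered by a translate of $\mu B$. For this I would fix any $b_0 \in B$ and verify that $\lambda B + x \subseteq \mu B + x'$ for $x' = x - (\mu-\lambda)b_0$, since for each $c \in B$ the point $\frac{1}{\mu}\bigl(\lambda c + (\mu-\lambda)b_0\bigr) = \frac{\lambda}{\mu}c + \frac{\mu-\lambda}{\mu}b_0$ is a convex combination of $c$ and $b_0$ and hence lies in $B$ (the case $\mu=0$ forces $\lambda=0$ and is trivial). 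Thus $\Lambda_1$ and $\Lambda_2$ are upward-closed subsets of $[0,\infty)$, so each is a half-line whose infimum is $R(\pi_1(A),B)$, respectively $R(\pi_2(A),C)$. Their intersection $\Lambda$ is then a half-line with left endpoint $\max\{R(\pi_1(A),B),R(\pi_2(A),C)\}$, and taking infima yields $R(A,B\times C) = \max\{R(\pi_1(A),B),R(\pi_2(A),C)\}$.

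I would expect the only genuinely delicate step to be the clean separation of the two translation components; once one records that the translate of a product is the product of translates, everything decouples and the remainder is elementary. The upward-closedness observation is routine but worth isolating, as it is exactly what guarantees that the infimum of the intersection equals the maximum of the two infima, rather than something strictly smaller.
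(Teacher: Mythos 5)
Your proof is correct and follows essentially the same route as the paper's: both rest on the observation that a translate of $\lambda(B \times C)$ is the product of translates of $\lambda B$ and $\lambda C$, and that containment in a product decouples into containment of the two projections, with the translation components chosen independently. Your reformulation via the feasible sets $\Lambda = \Lambda_1 \cap \Lambda_2$ together with the upward-closedness lemma is, if anything, slightly more careful about passing to infima than the paper's two-inequality argument, which tacitly assumes the infimum is attained.
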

\begin{proof}
Suppose $\lambda = R(A,B \times C)$ and that 
\[A  + (a_1,a_2) \subseteq \lambda (B \times C).\]
Applying $\pi_1$ and $\pi_2$ to both sides gives
\[\pi_1(A) + a_1 \subseteq \lambda B\]
and 
\[\pi_2(A) + a_2 \subseteq \lambda C.\]
Hence $ \max\{R(\pi_1(A),B), R(\pi_2(A),C)\} \leq R(A,B \times C)$. 

Conversely, if $\lambda =  \max\{R(\pi_1(A),B), R(\pi_2(A),C)\}$, then there are $a_1,a_2$ such that 
\begin{align*}
\pi_1(A) + a_1 & \subseteq \lambda B \\
\pi_2(A) + a_2 & \subseteq \lambda C.
\end{align*}
We then have
\begin{align*}
A + (a_1,a_2) & \subseteq \pi_1(A) \times \pi_2(A) + (a_1,a_2) \\
& = (\pi_1(A) + a_1) \times (\pi_2(A) + a_2) \\
& \subseteq \lambda (B \times C)
\end{align*}
so that $R(A,B \times C) \leq \max\{R(\pi_1(A),B), R(\pi_2(A),C)\}$.
\end{proof}

\section{Characterization of Minkowski diversities}

We begin this section by proving 
the first main result connecting the theory of generalized circumradii with  diversity theory.

\begin{thm} \label{thm:GCRdiversity} Let $K$ be a convex body in $\Re^d$. If $\delta(A) = R(A,K)$ for all finite $A$, then $(\Re^d,\delta)$ is a diversity.
\end{thm}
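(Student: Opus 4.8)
The plan is to verify the two diversity axioms (D1) and (D2) directly from the definition of $R(\cdot,K)$. Axiom (D1) is the easy part. If $|A|\le 1$ then either $A=\emptyset$ or $A=\{a\}$; since $0\cdot K=\{\bzero\}$ we have $A\subseteq 0\cdot K+a$, so $R(A,K)=0$. Conversely, suppose $A$ contains two distinct points $a,a'$. Any translate $\lambda K+x$ containing $A$ must have diameter at least $\|a-a'\|>0$, and since $K$ is a convex body it is bounded; hence $\lambda\,\mathrm{diam}(K)\ge\|a-a'\|$, forcing $\lambda\ge\|a-a'\|/\mathrm{diam}(K)>0$ and therefore $R(A,K)>0$. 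This gives (D1).

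The substance is (D2). Fix finite $A,B,C$ with $B\neq\emptyset$ and set $\lambda_1=R(A\cup B,K)$ and $\lambda_2=R(B\cup C,K)$. I would first record that these infima are attained: if $\lambda_n\downarrow\lambda_1$ with $A\cup B\subseteq\lambda_n K+x_n$, then fixing any point of $A\cup B$ shows the translates $x_n$ lie in a bounded set (a bounded set must sit inside the bounded body $\lambda_n K+x_n$), so a subsequence of $x_n$ converges and closedness of $K$ yields an exact cover at $\lambda_1$. Alternatively one simply runs the construction below with $\lambda_1+\epsilon$, $\lambda_2+\epsilon$ and lets $\epsilon\to 0$, which sidesteps the attainment issue entirely. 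Either way we obtain translates with $A\cup B\subseteq\lambda_1 K+x_1$ and $B\cup C\subseteq\lambda_2 K+x_2$.

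The key step is to exhibit a single translate of $(\lambda_1+\lambda_2)K$ that covers $A\cup C$. Here I use the common point: choose $b\in B$, which lies in both covers, so that $b-x_1\in\lambda_1 K$ and $b-x_2\in\lambda_2 K$. I claim the translate $x:=x_1+x_2-b$ works, the mechanism being the Minkowski-sum identity $\lambda_1 K+\lambda_2 K=(\lambda_1+\lambda_2)K$, valid because $K$ is convex. Indeed $x_1-x=b-x_2\in\lambda_2 K$, so
\[\lambda_1 K+x_1\subseteq\lambda_1 K+\lambda_2 K+x=(\lambda_1+\lambda_2)K+x,\]
which places $A$ inside $(\lambda_1+\lambda_2)K+x$; symmetrically $x_2-x=b-x_1\in\lambda_1 K$ places $C$ inside the same translate. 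Hence $R(A\cup C,K)\le\lambda_1+\lambda_2=R(A\cup B,K)+R(B\cup C,K)$, which is precisely (D2).

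The only genuine obstacle is producing the correct translate for the union, and the device that resolves it is anchoring at a shared point $b\in B$ and combining the two covers through the Minkowski-sum identity for convex bodies; the compactness and boundedness of $K$ then play a supporting role, both guaranteeing strict positivity in (D1) and ensuring the infima are attained so that genuine covers (rather than approximate ones) are available to combine.
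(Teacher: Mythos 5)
Your proof is correct. The paper handles (D2) by translating a common point $x\in A\cap B$ to the origin, observing that $(A-x)\cup(B-x)\subseteq (A-x)+(B-x)$, and then invoking the Minkowski-sum subadditivity $R(A'+B',K)\le R(A',K)+R(B',K)$ and translation invariance, both cited from Proposition~\ref{jahn1}; monotonicity then upgrades the ``common point'' inequality to the full triangle inequality. Your argument is the same underlying mechanism with the black box opened: the translate $x=x_1+x_2-b$ together with the identity $\lambda_1K+\lambda_2K=(\lambda_1+\lambda_2)K$ is exactly what drives that subadditivity, so you get a self-contained proof that does not lean on the cited proposition. You are also more explicit than the paper on two points it passes over: that $R(A,K)>0$ whenever $|A|\ge 2$ (your diameter bound, which needs both boundedness and nonempty interior of $K$), and that one must either establish attainment of the infimum or run the argument with $\lambda_i+\epsilon$ and let $\epsilon\to 0$. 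Both routes are sound; yours trades reliance on the quoted properties of $R$ for a slightly longer but fully elementary verification.
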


\begin{proof}
Clearly $\delta(A) \geq 0$ for all $A$ and $\delta(A) = R(A,K) = 0$ if and only if $|A| \leq 1$. Hence (D1) holds. By Proposition~\ref{jahn1} (a), $R(A,K)$ is monotonic in $A$. 

Suppose $A$ and $B$ are finite subsets of $\Re^d$ and $x \in A \cap B$. Then $(A-x) = (A-x) + 0 \subseteq (A-x)+(B-x)$ and $(B-x) = 0 + (B-x)  \subseteq (A-x)+(B-x)$.  Hence 
\[(A-x) \cup (B-x) \subseteq (A-x) + (B-x),\]
and so by Proposition~\ref{jahn1} (d) and (c),
\begin{align*}
\delta(A \cup B) &= \delta( (A \cup B) - x ) \\
&= \delta((A-x) \cup (B-x)) \\
& \leq \delta(A-x) + \delta(B-x) \\
&= \delta(A) + \delta(B).
\end{align*}
This fact, together with monotonicity, implies the diversity triangle inequality (D2).

\end{proof}

In the rest of this section we focus on
characterizing which diversities on $\Re^d$ can be obtained in this way (Theorem~\ref{GCRcharacterize}).

Let $\sB$ denote the Euclidean unit ball in $\Re^d$. The {\em Hausdorff distance} on (bounded) subsets of $\Re^d$ is given by
\[d_H(A,B) = \inf\{\lambda: A \subseteq B + \lambda \sB \mbox{ and } B \subseteq A + \lambda \sB \},\]
see \cite{Schneider14}. We note that $d_H$ becomes a metric when restricted to compact sets.

A diversity $(\Re^d,\delta)$ is {\em sublinear} if 
$\delta$ is a sublinear function, 
$\delta(A+B) \leq \delta(A) + \delta(B)$ and $\delta(\lambda A) = \lambda \delta(A)$ for all finite $A,B$ and non-negative $\lambda$. 
Note that finite sublinear diversities are closely related to {\em set-norms} \cite[Def. 2.1]{croitoru2010set}. The $L_1$ diversity $(\Re^d,\delta_1)$, as introduced earlier, is an example of a sublinear diversity. Here, 
 \[ \delta_{1}(A) = \sum_{i=1}^d \max\{|a_i - a'_i|:a,a' \in A\}, \]
 and $\delta_1$ is sublinear since,
 given $\lambda \geq 0$ and finite subsets $A,B \subseteq \Re^d$ we have
 \begin{align*}
     \delta_{1}(\lambda A) & =  \sum_{i=1}^d \max\{|\lambda a_i - \lambda a'_i|:a,a' \in A\}  \\
     & = \lambda \delta_1(A), \\ \intertext{ and }
     \delta_1(A + B) & = \sum_{i=1}^d \max\{|a_i+b_i- a'_i-b'_i|:a,a' \in A, b,b' \in B \} \\
     & \leq \sum_{i=1}^d \max\{|a_i- a'_i|:a,a' \in A\} + \max\{|b_i - b'_i|:b,b' \in B\} \\
     & = \delta_1(A) + \delta_1(B).
 \end{align*}

Sublinearity has several important consequences for diversities.  

\begin{prop} \label{subDprop}
Let $(\Re^d,\delta)$ be a sublinear diversity. Then
\begin{enumerate}
\item[(a)] $\delta$ is translation invariant: $\delta(A+x) = \delta(A)$ for all $x$.
\item[(b)] $\delta$ is determined by the convex hull: if $A,B$ finite in $\Re^d$ and $\conv(A) = \conv(B)$, then $\delta(A) = \delta(B)$.
\item[(c)] $\delta$ is  Lipschitz continuous with respect to the Hausdorff metric.
\end{enumerate}
\end{prop}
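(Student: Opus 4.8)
The plan is to establish the three parts in sequence, each as a consequence of the two sublinearity properties (subadditivity and positive homogeneity) combined with monotonicity and axiom (D1). Part (c) will lean on part (b), so the order matters.

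For (a) I would use that singletons have zero diversity. Writing the translate as a Minkowski sum $A + x = A + \{x\}$, subadditivity gives $\delta(A+x) \le \delta(A) + \delta(\{x\}) = \delta(A)$, since $\delta(\{x\}) = 0$ by (D1). Applying the same inequality to $(A+x) + \{-x\} = A$ yields the reverse bound, hence equality.

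For (b) the key lemma is that adjoining a point of $\conv(A)$ does not change $\delta$. Given $c = \sum_{i=1}^k \lambda_i a_i$ with $a_i \in A$, $\lambda_i \ge 0$ and $\sum_i \lambda_i = 1$, I would look at the Minkowski combination $\lambda_1 A + \cdots + \lambda_k A$. Iterating subadditivity and using homogeneity bounds its diversity by $\sum_i \lambda_i \delta(A) = \delta(A)$; on the other hand this sum contains both $A$ (take all summands equal) and $c$ (take the $i$-th summand at $a_i$). Monotonicity then gives $\delta(A \cup \{c\}) \le \delta(A)$, and the reverse is monotonicity. Adjoining points of $\conv(A)$ one at a time, and noting $\conv(A \cup \{c\}) = \conv(A)$, extends this to $\delta(A \cup C) = \delta(A)$ for every finite $C \subseteq \conv(A)$. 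Finally, if $\conv(A) = \conv(B)$ then $B \subseteq \conv(A)$ gives $\delta(A \cup B) = \delta(A)$ and $A \subseteq \conv(B)$ gives $\delta(A \cup B) = \delta(B)$, so $\delta(A) = \delta(B)$.

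For (c) I would fix $\lambda > d_H(A,B)$, so that $A \subseteq B + \lambda \sB$ and $B \subseteq A + \lambda \sB$. For each $a \in A$ write $a = b_a + \lambda u_a$ with $b_a \in B$ and $u_a \in \sB$, and set $B_A = \{b_a : a \in A\} \subseteq B$ and $U_A = \{u_a : a \in A\} \subseteq \sB$; then $A \subseteq B_A + \lambda U_A$. Subadditivity, homogeneity and monotonicity give $\delta(A) \le \delta(B_A) + \lambda\delta(U_A) \le \delta(B) + \lambda\delta(U_A)$. To bound $\delta(U_A)$ uniformly I invoke part (b): since $\sB \subseteq [-1,1]^d = \conv(V)$ where $V = \{-1,1\}^d$, we have $U_A \subseteq \conv(V)$, so $\delta(U_A) \le \delta(U_A \cup V) = \delta(V)$; writing $M = \delta(V)$ this gives $\delta(A) \le \delta(B) + M\lambda$. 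By symmetry $\delta(B) \le \delta(A) + M\lambda$, and letting $\lambda \downarrow d_H(A,B)$ yields $|\delta(A) - \delta(B)| \le M\, d_H(A,B)$, i.e. Lipschitz continuity with constant $M = \delta(\{-1,1\}^d)$.

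The main obstacle is precisely the finitization in (c): the Hausdorff containment involves the infinite sets $\sB$ and $B + \lambda\sB$, whereas $\delta$ acts only on finite sets, so I must extract finite witnesses $B_A, U_A$ and then certify a uniform bound on the ball-offset diversity $\delta(U_A)$. It is part (b) that supplies this uniform bound (via the cube spanning $\sB$), which is the reason the parts are proved in this order; parts (a) and (b) are otherwise routine applications of sublinearity.
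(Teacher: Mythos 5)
Your proof is correct and follows essentially the same route as the paper's: part (a) via $\delta(\{x\})=0$, part (b) via the Minkowski combination $\sum_i \lambda_i A$ absorbing both $A$ and the new convex-hull point, and part (c) via finite witnesses inside $\sB$ bounded through the cube $V=\{-1,1\}^d$ and part (b). The only cosmetic difference is that you adjoin hull points one at a time where the paper argues with a maximal subset $B'\subseteq B$, and your explicit $\lambda \downarrow d_H(A,B)$ limit in (c) is a slightly more careful handling of the infimum in the Hausdorff distance.
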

\begin{proof}
(a) By (D1) and sublinearity, we have 
\begin{align*}
\delta(A+x) &\leq \delta(A) + \delta(\{x\}) = \delta(A), \mbox{ and } \\ 
\delta(A)& \leq \delta(A+x) + \delta(\{-x\}) = \delta(A+x).
\end{align*}
(b) Let $B' \subseteq B$ be a maximal subset of $B$ such that $\delta(A \cup B') = \delta(A)$. Suppose that there is $b \in B \setminus B'$. As $b  \in \conv(B) = \conv(A)$ there are non-negative $\{\lambda_a\}_{a \in A}$ with unit sum such that 
\[b= \sum_a \lambda_a a \]
and hence
 \[ b \in \sum_a \lambda_a A .\]
 As 
 \[A \cup B' \subseteq \sum_a \lambda_a (A \cup B') \]
 we have
 \[\delta(A \cup B' \cup \{b\} ) \leq  \delta\Big( \sum_a \lambda_a (A \cup B') \Big) \leq \sum_a \lambda_a \delta(A \cup B') = \delta(A \cup B')= \delta(A).\]
 This contradicts the choice of $B'$. Hence $B' = B$ and $\delta(A \cup B) = \delta(A)$. Exchanging $A$ and $B$ in this argument gives $\delta(A \cup B) = \delta(B)$. Hence $\delta(A)=\delta(B)$.\\
(c) Let $V$ denote any finite set with convex hull $\conv(V)$ containing the unit ball $\sB$. One example is $V = 
\{-1,1\}^d$.  Let $\kappa = \delta(V)$  and suppose that $d_H(A,B) = \lambda$.
From the definition of $d_H$ we have $A \subseteq B +\lambda \sB$ and $B  \subseteq A + \lambda \sB$ so there are finite sets $A',B' \subseteq \sB$ such that
\[ A \subseteq B + \lambda A' \mbox{  and  }	B  \subseteq A + \lambda B'.\]
As $\sB \subseteq \conv(V)$, we have $\conv(A' \cup V) \!=\! \conv(B' \cup V) \!=\! \conv(V)$. By sublinearity, monotonicity of $\delta$ and part (b) we therefore have
\[ \delta(A)  \leq  \delta(B) + \lambda \delta(V) \mbox{ and } \delta(B)  \leq \delta(A) + \lambda \delta(V) \]
giving $|\delta(A) - \delta(B) | \leq \kappa d_H(A,B)$.
\end{proof}

Our next theorem gives a complete characterization of Minkowski diversities. A key idea in the proof of the theorem is that we can extend a sublinear diversity $(\Re^d,\delta)$ to a function on convex bodies in $\Re^d$. More specifically, given a sublinear diversity $\delta$, define the 
function $\td$ on the set  of convex bodies in $\Re^d$ by setting 
\[\td(P) = \delta(\vert(P))\]
for all polytopes $P$ with vertex set $\vert(P)$ and 
\[\td(K) = \ \lim_{n \rightarrow \infty} \td(P_n)\]
for any convex body $K$ and sequence $P_1,P_2,\ldots$ of polytopes converging under the Hausdorff metric to $K$.  

\begin{lem}\label{help}
Given a sublinear diversity $\delta$, the function $\td$ is well-defined and Lipschitz continuous.
\end{lem}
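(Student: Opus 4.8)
The plan is to prove the lemma in two stages: first establish a Lipschitz bound for $\td$ on the subfamily of polytopes, and then use that bound to show the limiting definition is unambiguous on arbitrary convex bodies and inherits Lipschitz continuity. Throughout I would fix a finite set $V$ whose convex hull contains the Euclidean unit ball, for instance $V=\{-1,1\}^d$, and set $\kappa = \delta(V)$, mirroring the setup in the proof of Proposition~\ref{subDprop}(c).

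For the polytope stage, observe first that $\td(P)=\delta(\vert(P))$ is unambiguous, since the vertex set of a polytope is uniquely determined. To get the estimate, take polytopes $P,Q$, put $\lambda = d_H(P,Q)$ and $B = \vert(Q)$, so that $Q = \conv(B)$. From $P \subseteq Q + \lambda\sB$ and $\sB \subseteq \conv(V)$ I would deduce, using $\conv(X)+\conv(Y)=\conv(X+Y)$ and $\lambda\conv(V)=\conv(\lambda V)$,
\[ P \subseteq \conv(B) + \lambda\conv(V) = \conv(B + \lambda V). \]
Since $\vert(P)\subseteq \conv(B+\lambda V)$ while $B+\lambda V \subseteq \conv(B+\lambda V)$, the two sets $\vert(P)\cup(B+\lambda V)$ and $B+\lambda V$ have the same convex hull, so Proposition~\ref{subDprop}(b) gives $\delta(\vert(P)\cup(B+\lambda V))=\delta(B+\lambda V)$. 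Applying monotonicity and then sublinearity yields
\[ \td(P) = \delta(\vert(P)) \leq \delta(B+\lambda V) \leq \delta(B)+\lambda\,\delta(V) = \td(Q) + \kappa\, d_H(P,Q). \]
Exchanging $P$ and $Q$ then gives $|\td(P)-\td(Q)| \leq \kappa\, d_H(P,Q)$ for all polytopes.

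With the polytope bound available, the extension is routine. Every convex body is a Hausdorff limit of polytopes, so given $K$ and a sequence $P_n \to K$, that sequence is $d_H$-Cauchy, and the Lipschitz bound makes $(\td(P_n))$ Cauchy in $\Re$, hence convergent. If $P_n \to K$ and $Q_n \to K$ are two approximating sequences, then $d_H(P_n,Q_n)\to 0$ forces $|\td(P_n)-\td(Q_n)|\to 0$, so their limits agree; taking $P_n=K$ constant shows the limit reduces to $\delta(\vert(K))$ when $K$ is itself a polytope. Thus $\td$ is well-defined. Finally, for convex bodies $K,K'$ I would choose approximating polytope sequences and pass to the limit in the polytope inequality, using continuity of $d_H$, to obtain $|\td(K)-\td(K')| \leq \kappa\, d_H(K,K')$.

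The main obstacle is the polytope estimate: one cannot simply apply Proposition~\ref{subDprop}(c) with $A=\vert(P)$ and $B=\vert(Q)$, because the Hausdorff distance between vertex sets can be far larger than $d_H(P,Q)$, as vertices may merge or emerge under arbitrarily small perturbations of a polytope. The remedy is to never compare vertex sets directly, but instead absorb the perturbation into a convex hull via the containment $P \subseteq \conv(B+\lambda V)$, so that convex-hull invariance (part~(b)) carries the argument. Once this is set up correctly, the remaining work is standard Cauchy-sequence bookkeeping.
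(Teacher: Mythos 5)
Your proof is correct and follows essentially the same route as the paper: establish that $\td$ is Lipschitz on the polytopes with constant $\kappa=\delta(V)$, then extend by uniform continuity to all convex bodies. The only difference is that you spell out why the polytope bound holds---via the containment $P\subseteq\conv(\vert(Q)+\lambda V)$ and convex-hull invariance from Proposition~\ref{subDprop}(b)---whereas the paper simply asserts Lipschitzness on polytopes as a consequence of Proposition~\ref{subDprop}(c); your observation that the Hausdorff distance between vertex sets can exceed that between the polytopes themselves is exactly the point that makes this extra care worthwhile.
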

\begin{proof}
Since $\td$ is Lipschitz on the set of polytopes, it is uniformly continuous on the same set, and so can be uniquely extended to a continuous function on the closure of that set \cite[Prob. 13, Ch. 4]{rudin1976principles}, the convex bodies. An expression for $\td$ can then be obtain for convex bodies using the limits of sequences of polytopes as above, and this gives that the extension has the same Lipschitz constant.
\end{proof}

Using this observation, we now prove the main theorem for this section.

 \begin{thm}\label{GCRcharacterize}
Let $(\Re^d,\delta)$ be a diversity. Then $(\Re^d,\delta)$ is a Minkowski diversity if and only if $\delta$ is sublinear and for all finite $A,B$ there are $a,b \in \Re^d$ such that 
\begin{equation} \label{ABcup}
 \delta( (A + a) \cup (B + b) ) \leq \max\{\delta(A),\delta(B)\}.
 \end{equation}
\end{thm}
\begin{proof}
Suppose that $(\Re^d,\delta)$ is a Minkowski diversity, so that there is a convex body $K \subseteq \Re^d$ such that $\delta(A) = R(A,K)$ for all finite $A \subseteq \Re^d$. Sublinearity is given by Proposition~\ref{jahn1} parts (c) and (e). Given finite $A$ and $B$, there are $a,b \in \Re^d$ such that $A \subseteq \delta(A) K - a$ and $B \subseteq \delta(B)K - b$. Hence 
\[(A +a) \cup (B+b) \subseteq \max\{\delta(A),\delta(B)\} K\]
and  
\begin{align*}
    \delta((A +a) \cup (B+b)) & = R((A +a) \cup (B+b),K) \\
    & \leq R(\max\{\delta(A),\delta(B)\} K,K) \\
    & = \max\{\delta(A),\delta(B)\} R(K,K) \\
    & = \max\{\delta(A),\delta(B)\}.
\end{align*}

Now suppose that $(\Re^d,\delta)$ is sublinear and satisfies \eqref{ABcup} for all finite $A,B$.  
Let 
\[\rho = \max\left\{\frac{\|x\|}{\delta(\{0,x\})}: x \neq 0\right\} = \max\left\{\frac{1}{\delta(\{0,x\})}: \|x\|=1\right\}.\]
Then $0<\rho<\infty$ and $\|x-y\| \leq \rho \delta(\{x,y\})$  for all $x,y$.
(Here and below we use $\| \cdot \|$ to denote Euclidean norm).

We  show that for any convex bodies $L,K \subseteq \Re^d$ and the 
	function  $\td$ in Lemma~\ref{help} there is some $x \in \Re^d$ such that
	\begin{equation} \label{eqn:inequality_lemma}
 \td(\conv(K \cup (L+x))) \leq \max\{\td(K),\td(L)\}.
 \end{equation}
Suppose that $K_1,K_2,\ldots$ is a sequence of finite subsets of $K$ such that $\conv(K_n) \rightarrow K$ and $L_1,L_2,\ldots$ is a sequence of finite subsets of $L$ such that $\conv(L_n) \rightarrow L$.  Applying (\ref{ABcup}) and translation invariance of $\delta$ we have that for each $n \in \bbN$ there is an $x_n \in \Re^d$ such that 
\[\delta(K_n \cup (L_n + x_n)) \leq \max\{\delta(K_n),\delta(L_n)\}.\]
Hence, for all $n$,
\begin{equation} \label{eqn:Hausdorff_limit}
\td ( \conv(K_n \cup (L_n+x_n))) \leq \max\{ \td( \conv(K_n) ), \td( \conv(L_n))\}.
\end{equation}

We show that the sequence $x_n$ has a convergent subsequence. 
First note that since $K$ and $L$ are bounded, convergence of $\conv(K_n)$ and $\conv(L_n)$ to them in the Hausdorff metric implies that the union of all these sets is bounded, and hence  the sequence $\max\{\delta(K_n),\delta(L_n)\}$
is bounded. Choose $k_n \in K_n$ and $\ell_n \in L_n$ for each $n$, which are then bounded over all $n$.
 We then have 
 \[\|x_n - (k_n - \ell_n) \| = \|(x_n + \ell_n)-k_n \| \leq \rho  \delta(K_n \cup (L_n + x_n)) \leq \max\{\delta(K_n),\delta(L_n)\}.\]
 So the set $\{x_1,x_2,x_3,\ldots\}$ is bounded. Let $x_{i_1},x_{i_2},\ldots$ be a convergence subsequence and let $x \in \Re^d$ be its limit.

Since $\conv(L_n) \rightarrow L$ and $x_n \rightarrow x$, $\conv(L_n +x_n) \rightarrow L + x$. So, $\conv(K_n) \cup \conv(L_n + x_n)  \rightarrow K \cup (L +x)$
and 
\[
\conv(K_n \cup (L_n + x_n)) = \conv(\conv(K_n) \cup \conv(L_n + x_n)) \rightarrow \conv( K \cup (L+x)).
\]
Taking the limit as $n\rightarrow \infty$ of \eqref{eqn:Hausdorff_limit} and using the continuity of $\td$ gives 
\eqref{eqn:inequality_lemma}
which proves the claim.

Let $\sC$ denote the set of convex bodies 
\[\sC = \{A \subseteq \Re^d: \td(A) \leq 1\}.\]
The set $\sC$ is closed under the Hausdorff metric and both volume and $\td$ are continuous with respect to the Hausdorff metric \cite[Sec 1.8]{Schneider14}. 
It follows that there is some $K \in \sC$ such that the volume of $K$ is at least as large as any other element in $\sC$. The convex body $K$ is necessarily inclusion-maximum: if $K$ was a proper subset of some $K' \in \sC$ then the volume of $K'$ would be strictly greater than the volume of $K$. \\

  We claim that $R(A,K)=1$ for all finite $A$ such that $\delta(A) = 1$.\\

Let $A$ be finite with $\delta(A)=1$. By \eqref{eqn:inequality_lemma} there is  $x \in \Re^d$ such that \[\td(\conv(K \cup (\conv(A)+x))) \leq \max\{\td(K),\td(\conv(A))\} = 1.\]
We therefore have $\conv(K \cup (\conv(A)+x)) \in \sC$. As $K$ is set inclusion-maximum in $\sC$ and $K \subseteq \conv(K \cup (\conv(A)+x)) $ we have
\[K = \conv(K \cup (\conv(A)+x))\]
and so $A + x \subseteq K$ and $R(A,K) \leq 1$. On the other hand, if there is some  $\lambda\geq 0$ and $z \in \Re^d$ such that $A \subseteq \lambda K +z$ then 
\[\delta(A) = \td(\conv(A)) \leq \td(\lambda K + z) = \lambda \td(K) = \lambda,\]
showing that $R(A,K) \geq 1$. Hence $R(A,K)=1$, as claimed.

It follows that $\delta(A)=R(A,K)$ when $\delta(A)=1$.

More generally, suppose  $\delta(A)=d$. The case $d=0$ is straightforward, as then $|A|\leq 1$. If $d>0$ then, by sublinearity, $\delta(A)=d \delta(\frac{1}{d}A) = d R(\frac{1}{d} A,K) = R(A,K)$.
\end{proof}

We note that there are diversities on $\Re^d$ which are sublinear, but do not satisfy property \eqref{ABcup} in Theorem~\ref{GCRcharacterize} (and are hence not Minkowski diversities). For example, consider the $L_1$ diversity in the plane $(\Re^2,\delta_1)$. We saw above that $L_1$ diversities are sublinear but if 
\[A = \left\{ \mat{0\\0},\mat{1\\0} \right\} \mbox{ and } B = \left\{ \mat{0\\0},\mat{0\\1} \right\} \]
then for any $a,b \in \Re^2$ we have 
\[\delta_1((A+a) \cup (B+b) ) \geq 2 > 1 = \max\{\delta_1(A),\delta_1(B)\}.\]
~\\

\begin{figure}[tbp]
\begin{center}
    \includegraphics[width=0.7\textwidth]{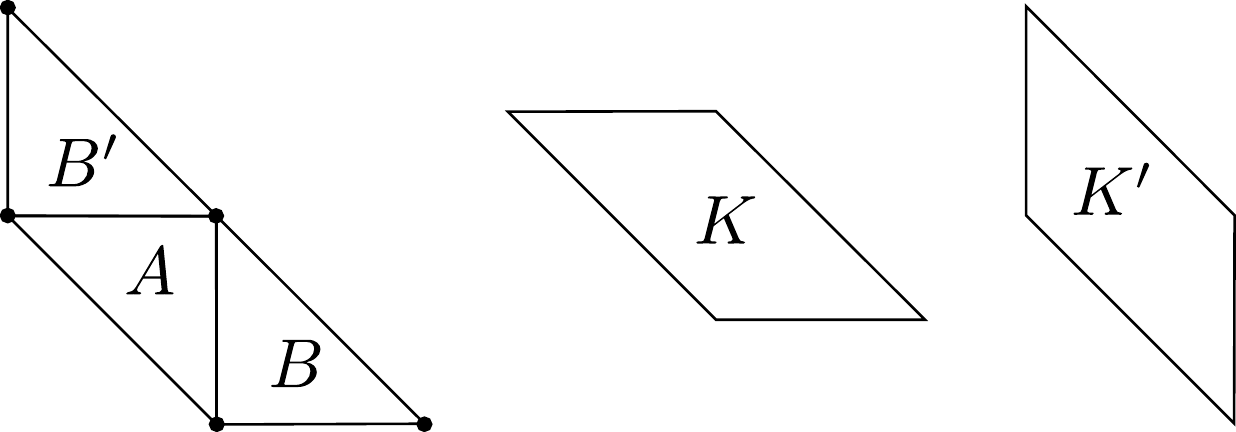}
\end{center}
\caption{\label{MinkowskiNonconvex} The sets $A,B,B'$ in the proof of Proposition~\ref{NonConvex} with (translated) kernels $K$ and $K'$. }
\end{figure}

The set of diversities on $\Re^d$, and indeed the the set of sublinear diversities on $\Re^d$, are both convex. Condition~\eqref{ABcup} in Theorem~\ref{GCRcharacterize} suggests that the set of Minkowski diversities on $\Re^d$ is not convex, as we now confirm.

\begin{prop} \label{NonConvex}
The set of Minkowski diversities on $\Re^d$, $d \geq 2$, is not convex.
\end{prop}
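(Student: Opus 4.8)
The plan is to exhibit two Minkowski diversities whose average fails the characterization of Theorem~\ref{GCRcharacterize}, and hence is not itself a Minkowski diversity; since non-convexity of a set is witnessed by a single midpoint, this suffices. I would carry out the construction in $\Re^2$ first and then lift it to $\Re^d$ using Proposition~\ref{times1}. Fix a parameter $M>1$ and take the two thin rectangular kernels $K = [-1,1]\times[-\tfrac1M,\tfrac1M]$ and $K' = [-\tfrac1M,\tfrac1M]\times[-1,1]$. Writing $r_x(A)$ and $r_y(A)$ for the widths $\max-\min$ of $A$ in the two coordinate directions, a direct computation gives $R(A,K)=\tfrac12\max\{r_x(A),M\,r_y(A)\}$ and $R(A,K')=\tfrac12\max\{M\,r_x(A),r_y(A)\}$. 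Set $\delta=\tfrac12 R(\cdot,K)+\tfrac12 R(\cdot,K')$, the midpoint of two Minkowski diversities.

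Next I would record that $\delta$ is a sublinear diversity. Conditions (D1) and (D2), together with the two sublinearity identities $f(A+B)\le f(A)+f(B)$ and $f(\lambda A)=\lambda f(A)$, are each preserved under nonnegative linear combinations of functions, so this step is routine. Consequently, by Theorem~\ref{GCRcharacterize} it suffices to show that $\delta$ violates condition~\eqref{ABcup}.

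The heart of the argument is to test \eqref{ABcup} on the unit horizontal and vertical segments $A=\{(0,0),(1,0)\}$ and $B=\{(0,0),(0,1)\}$. One computes $\delta(A)=\delta(B)=\tfrac{1+M}{4}$, so $\max\{\delta(A),\delta(B)\}=\tfrac{1+M}{4}$. For arbitrary translates, set $C=(A+a)\cup(B+b)$. The crucial observation is that $r_x(C)\ge 1$ and $r_y(C)\ge 1$ for every choice of $a,b\in\Re^2$: projecting onto the $x$-axis, the set $A+a$ already spans width $1$, and projecting onto the $y$-axis, $B+b$ already spans width $1$. Feeding $r_x(C),r_y(C)\ge 1$ into the formula for $\delta$ yields $R(C,K)\ge\tfrac{M}{2}$ and $R(C,K')\ge\tfrac{M}{2}$, whence $\delta(C)\ge\tfrac{M}{2}$. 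Since $\tfrac{M}{2}>\tfrac{1+M}{4}$ exactly when $M>1$, no translates satisfy \eqref{ABcup}, so $\delta$ is not a Minkowski diversity.

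Finally, for general $d\ge 2$ I would pad both kernels with a common factor $[-1,1]^{d-2}$ and keep $A,B$ supported in the first two coordinates (all other coordinates zero). By Proposition~\ref{times1} the padded block contributes $0$ to every circumradius of these sets, so the planar computation and the bounds $r_x(C),r_y(C)\ge 1$ transfer verbatim. The main obstacle is really the only nonroutine point, namely the uniform lower bounds $r_x(C),r_y(C)\ge 1$ over all translations; once these are established the failure of \eqref{ABcup} is immediate, and the preservation of the diversity and sublinearity properties under averaging, as well as the reduction via Proposition~\ref{times1}, are straightforward.
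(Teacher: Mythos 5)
Your proof is correct. It follows the same overall strategy as the paper's: exhibit two Minkowski diversities whose midpoint violates condition~\eqref{ABcup}, and invoke the ``only if'' direction of Theorem~\ref{GCRcharacterize}. The witness construction, however, is genuinely different, and arguably cleaner. The paper takes $K=\conv(A\cup B)$ and $K'=\conv(A\cup B')$ for three small triangles $A,B,B'$, and the failure of \eqref{ABcup} rests on a rigidity claim: for each kernel there is exactly one translate $b$ achieving $R(A\cup(B+b),\cdot)=1$, and these two optimal translates differ, so the average always exceeds $1$ --- a claim the paper asserts with only a brief case statement and which requires some checking. Your thin-rectangle kernels $[-1,1]\times[-\tfrac1M,\tfrac1M]$ and its transpose instead give an explicit closed form $R(A,K)=\tfrac12\max\{r_x(A),M\,r_y(A)\}$, and the obstruction becomes a uniform quantitative gap: any union of a translated horizontal unit segment with a translated vertical unit segment has both coordinate widths at least $1$, forcing $\delta(C)\ge \tfrac{M}{2}$ against $\max\{\delta(A),\delta(B)\}=\tfrac{1+M}{4}$, with no case analysis over translates. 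Your computations check out (including that the midpoint is still a diversity, and the lift to $\Re^d$ via padding with $[-1,1]^{d-2}$ and Proposition~\ref{times1}), and the parameter $M$ even shows the violation of \eqref{ABcup} can be made arbitrarily large in ratio, which the paper's example does not give.
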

\begin{proof}
We first  establish this for $d=2$. Let 
\begin{align*}
A &= \left\{ \mat{1\\0},\mat{0\\1},\mat{1\\1} \right\}, \\
B &= \left\{ \mat{1\\0},\mat{2\\0},\mat{1\\1} \right\}, \\
B' & = \left\{ \mat{0\\1},\mat{1\\1},\mat{0\\2} \right\} .
\end{align*}
Let $K = \conv(A \cup B)$ and $K' = \conv(A \cup B')$. Let $(X,\delta)$ be the diversity on $\Re^2$ given by $\delta(Y) = \half R(Y,K) + \half R(Y,K')$. We will show that for all $a,b \in \Re^2$ 
\[\delta( (A + a) \cup (B + b) ) > \max\{\delta(A),\delta(B)\}, \]
from which it follows by Theorem~\ref{GCRcharacterize} that  $(X,\delta)$ is not a Minkowski diversity. 

First note that since $\delta$ is translation invariant, we can assume $a=0$. Now, note that
\[R(A,K) = R(A,K') = R(B,K) = R(B,K') = 1\]
so that 
\[\max\{\delta(A),\delta(B)\}  = 1.\]

 If $b=0$ then $R(A \cup (B+b),K) = 1$, otherwise $R(A \cup (B+b),K) > 1$.
 If $b = \mat{-1\\1}$ then $R(A \cup (B+b),K') = 1$, otherwise $R(A \cup (B+b),K') > 1$. 
 Hence we have 
\[	\delta( (A + a) \cup (B + b) )  = \half R(A \cup (B+b) ,K) + \half R(A \cup (B+b),K')  > 1\]
even though $\max\{\delta(A),\delta(B)\} = 1$.

For $d>2$, in the above argument we replace $K$ and $K'$ with their product with the unit hypercube in $\Re^{d-2}$, and append $d-2$ zeros to the elements in $A$ and $B$ and to $x$.
\end{proof}

\section{Characterizing the general circumradius}

In this section, we characterize functions $f$ for which there is a convex body $K$ such that $f(A) = R(A,K)$ for all bounded $A$, noting that in the previous section we only considered finite $A$.  The main idea behind the proof is to show that a sublinear, monotonic function is Hausdorff continuous, after which the result follows almost immediately from Theorem~\ref{GCRcharacterize}.

\begin{lem}\label{helper}
Let $f$ be a sublinear, monotonic function on bounded subsets of $\Re^d$. Then $f$ is Hausdorff continuous. 
\end{lem}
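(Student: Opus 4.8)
## Proof Proposal for Lemma~\ref{helper}

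The plan is to adapt the argument from Proposition~\ref{subDprop}(c), which established Hausdorff-Lipschitz continuity for sublinear diversities, to the present setting of a sublinear, monotonic function $f$ on \emph{bounded} (not necessarily finite) subsets of $\Re^d$. The essential ingredient there was that sublinearity plus monotonicity lets one compare $f(A)$ and $f(B)$ by bounding each set inside a translate and scaled copy of the other, paying a cost controlled by $f$ applied to a fixed body containing the unit ball $\sB$.

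First I would fix a convex body whose value under $f$ serves as the Lipschitz constant. Let $V$ be any bounded set with $\sB \subseteq \conv(V)$; the canonical choice is the cube $V = \{-1,1\}^d$ as in Proposition~\ref{subDprop}(c). Set $\kappa = f(V)$, which is a finite nonnegative constant. The key point is that for any $\lambda \geq 0$ we have $\lambda \sB \subseteq \lambda \conv(V)$, and since $f$ is sublinear, $f(\lambda V) = \lambda f(V) = \lambda \kappa$; combined with monotonicity this lets me absorb any ``$\lambda \sB$'' error term into a cost of at most $\lambda \kappa$.

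Next, suppose $d_H(A,B) = \lambda$. By definition of the Hausdorff distance, $A \subseteq B + \lambda \sB$ and $B \subseteq A + \lambda \sB$. Unlike in the finite case, here $A$, $B$, and the error balls are already bounded sets, so I can work directly with $A \subseteq B + \lambda V$ (using $\sB \subseteq \conv(V)$ and monotonicity to enlarge the ball to the scaled body). Applying monotonicity followed by sublinearity gives
\[
f(A) \leq f(B + \lambda V) \leq f(B) + f(\lambda V) = f(B) + \lambda \kappa,
\]
and by the symmetric argument $f(B) \leq f(A) + \lambda \kappa$. Together these yield $|f(A) - f(B)| \leq \kappa\, d_H(A,B)$, which is exactly Lipschitz continuity with respect to the Hausdorff metric, hence Hausdorff continuity.

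The main obstacle I anticipate is a technical one: in Proposition~\ref{subDprop}(c) the argument passed through \emph{finite} sets $A', B' \subseteq \sB$ and invoked part~(b), which relies on the diversity axioms and hence on finiteness. Here $f$ is only assumed sublinear and monotonic on bounded sets, so I must verify that the inclusion $A \subseteq B + \lambda \sB \subseteq B + \lambda\conv(V)$ can be fed directly into monotonicity without any finite-approximation step. This should go through cleanly because monotonicity and sublinearity as stated apply to all bounded $A, B$, so no intermediate finite sets or appeal to the convex-hull property (b) are needed—indeed the bounded-set hypothesis is precisely what removes the need for the more delicate finite argument.
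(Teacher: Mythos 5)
Your proof is correct and is essentially the paper's argument: monotonicity gives $f(A)\leq f(B+\lambda\sB)$ and sublinearity bounds this by $f(B)+\lambda f(\sB)$, yielding a Lipschitz bound. The only difference is your detour through a finite set $V$ with $\sB\subseteq\conv(V)$, which is unnecessary here (and requires writing $\conv(V)$ rather than $V$ in the Minkowski sums): since $f$ is defined on all bounded sets, the paper simply uses the constant $f(\sB)$ directly, exactly as you anticipate in your final paragraph.
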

\begin{proof}
Let $\sB$ denote the Euclidean unit ball in $\Re^d$, let $b = f(\sB)$ and let $\epsilon >0$. Then 
for any bounded $A,B$ such that $d_H(A,B) \leq \frac{\epsilon}{b}$ we have
\[f(A) \leq f\left(B + \frac{\epsilon}{b} \sB\right) \leq f(B) + \epsilon \]
and
\[f(B) \leq f\left(A + \frac{\epsilon}{b} \sB\right) \leq f(A) + \epsilon.\]
\end{proof}

\begin{thm} \label{GCRcharacterize2}
Let $f$ be a function on bounded subsets of $\Re^d$. Then there is a convex body $K$ such that $f(A) = R(A,K)$ for all bounded $A$ if and only if $f$ is sublinear, monotonic, and  $f$ restricted to finite subsets is a Minkowski diversity.
\end{thm}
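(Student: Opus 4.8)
The plan is to prove both directions, with the forward direction (necessity) being routine and the reverse direction (sufficiency) relying on the machinery already built in Theorem~\ref{GCRcharacterize} together with the new continuity result in Lemma~\ref{helper}. First I would handle necessity: suppose $f(A) = R(A,K)$ for all bounded $A$. Sublinearity is immediate from Proposition~\ref{jahn1} parts (c) and (e), exactly as in the forward direction of Theorem~\ref{GCRcharacterize}. Monotonicity is Proposition~\ref{jahn1}(a). Finally, the restriction of $f$ to finite subsets is $A \mapsto R(A,K)$ on finite sets, which is a Minkowski diversity by definition (and is a genuine diversity by Theorem~\ref{thm:GCRdiversity}). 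So all three conditions hold.

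The substance is the reverse direction. Assume $f$ is sublinear, monotonic, and its restriction to finite subsets is a Minkowski diversity. By definition of Minkowski diversity there is a convex body $K$ with $f(A) = R(A,K)$ for every \emph{finite} $A \subseteq \Re^d$. The goal is to upgrade this from finite to all bounded $A$. The idea is a continuity/density argument: every bounded set $A$ can be approximated in the Hausdorff metric by finite subsets whose convex hulls converge to $\conv(A)$, and both $f$ and $R(\cdot,K)$ should be stable under such limits. Concretely, I would fix bounded $A$ and choose a sequence of finite subsets $A_n \subseteq A$ with $\conv(A_n) \to \conv(A)$ in the Hausdorff metric. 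Then $f(A_n) = R(A_n,K)$ for all $n$ since each $A_n$ is finite.

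For the limit step I would invoke two continuity facts. On the left, $f$ is Hausdorff continuous by Lemma~\ref{helper}, so $f(A_n) \to f(A)$, after noting that $f$ depends only on the convex hull (the convex-hull invariance follows for sublinear functions just as in Proposition~\ref{subDprop}(b), whose proof uses only sublinearity and monotonicity, not the diversity axioms). On the right, $R(\cdot,K)$ is also Hausdorff continuous: it is itself sublinear and monotonic in its first argument by Proposition~\ref{jahn1}, so Lemma~\ref{helper} applies to it as well, giving $R(A_n,K) \to R(A,K)$; alternatively one cites that $R(\cdot,K)$ depends only on the convex hull by Proposition~\ref{jahn1}(b) and is Hausdorff Lipschitz. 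Passing to the limit in $f(A_n) = R(A_n,K)$ then yields $f(A) = R(A,K)$ for arbitrary bounded $A$.

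The main obstacle I anticipate is making the approximation rigorous on two fronts. First, one must guarantee the existence of finite subsets $A_n \subseteq A$ with $\conv(A_n) \to \conv(A)$ in the Hausdorff metric; this is standard for bounded sets (pick finite $\epsilon$-nets of $\overline{\conv(A)}$ realized by points of $A$, handling the boundary carefully), but it deserves a clean statement since $A$ need not be closed or convex. Second, I must ensure both $f$ and $R(\cdot,K)$ are genuinely insensitive to the difference between $A_n$ and its convex hull, so that the two sides of $f(A_n)=R(A_n,K)$ can each be identified with a function of $\conv(A_n)$ before taking limits; this rests on the convex-hull invariance noted above. Once these points are in place, the continuity of both functions makes the limit passage immediate, and the theorem follows with essentially no further computation.
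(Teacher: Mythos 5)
Your proposal is correct and follows essentially the same route as the paper: approximate a bounded set $A$ by finite sets converging to it in the Hausdorff metric, use Lemma~\ref{helper} to get Hausdorff continuity of $f$ (and of $R(\cdot,K)$, which is likewise sublinear and monotonic), and invoke Theorem~\ref{GCRcharacterize} to produce the kernel $K$ from the finite restriction. The one caveat is that your detour through convex hulls is both unnecessary and shakier than you suggest --- the proof of Proposition~\ref{subDprop}(b) uses a maximal finite subset and does not obviously extend to infinite bounded sets --- whereas the paper simply takes $A_n$ to be the centers of a finite cover of $\overline{A}$ by balls of radius $1/n$, so that $d_H(A_n,\overline{A}) \to 0$ and $f(A_n)\to f(\overline A)=f(A)$ directly, with no appeal to convex-hull invariance.
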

\begin{proof}
Necessity follows from the arguments used for Theorem~\ref{GCRcharacterize}.

Suppose that $f$ is sublinear and monotonic, and $f$ restricted to finite subsets is a Minkowski diversity. By  Lemma~\ref{helper},  $f$ is Hausdorff continuous. Let $A$ be a bounded subset of $\Re^d$. Then $f(\overline{A}) = f(A)$, where $\overline{A}$ denotes the topological closure of $A$. 

By Theorem~\ref{GCRcharacterize} there is a convex body $K$ such that $f(B) = R(B,K)$ for any finite $B$. Given any natural number $n \ge 1$ there is a finite cover of $\overline{A}$ by balls of radius $\frac{1}{n}$. Let $A_n$ denote the set of centers of those balls, so that $A_n \rightarrow \overline{A}$. We then have 
\[f(A) = f(\overline{A}) = \lim_{n \rightarrow \infty} f(A_n) = \lim_{n \rightarrow \infty} R(A_n,K) = R(A,K).\]
\end{proof}

\section{Embedding finite diversities} \label{Section:FiniteEmbed}

In this section we consider an (isometric) embedding problem: when 
can a given finite diversity $(X,\delta)$ be embedded in a Minkowski diversity? 
There is a long history in mathematics regarding embedding of finite {\em metrics} into standard spaces. Perhaps best known is the characterization due to Cayley and Menger of when a finite metric can be embedded into Euclidean space \cite{Blumenthal70,DezaLaurent97}. The theory of metric embeddings forms the basis of many methods for multi-dimensional scaling, an approximate low-dimensional embedding designed specifically for data reduction and representation. Approximate embeddings have proven exceptionally useful for algorithm design and approximations (e.g.~\cite{LinialLondonEtal95}), work that has a direct analog in the mathematics of diversities \cite{BryantTupper14}.

To discuss embeddings, it is convenient to consider a slight generalization of diversities. A {\em semimetric} is a bivariate, symmetric map $d$ on $X$ that vanishes on the diagonal and satisfies the triangle inequality, but where we allow $d(x,y) = 0$ even when $x \neq y$, $x,y \in X$ (so, in particular a metric is a semimetric). Similarly, a pair $(X,\delta)$ is a {\em semidiversity} if it satisfies (D2) and the following slightly weaker version of (D1)
\begin{enumerate}
	\item[(D1')] $\delta(A)=0$ if  $|A| \leq 1$.
\end{enumerate}

\noindent We say that a (semi)diversity $(X,\delta)$ is {\em Minkowski-embeddable} if for some $d$ there is a map $\phi:X \rightarrow \Re^d$  and a convex body $K$ in $\Re^d$ such that 
\[ \delta(A) = R(\phi(A),K)\]
for all finite $A \subseteq X$.

For the rest of this section we shall focus on the embedding problem
for symmetric diversities, where a (semi)diversity $(X,\delta)$ 
is {\em symmetric} if $\delta(A) = \delta(B)$ whenever $|A| = |B|$, $A,B \subseteq X$, that is, the value of $\delta$ on a set depends only upon the cardinality of the set, see \cite{BryantTupper16}. We shall characterize when a finite symmetric diversity is Minkowski-embeddable. As a corollary we also show that not every diversity is  Minkowski-embeddable

We start with some utility results on embeddings. For convenience, for the rest of this section we shall assume that $X$ is a finite set. Note that if $(X,\delta_1)$ and $(X,\delta_2)$ are two semidiversities then $(X,\delta_1 \vee \delta_2)$ denotes the semidiversity with 
\[(\delta_1 \vee \delta_2)(A) = \max\{\delta_1(A),\delta_2(A)\}\]
for all $A \subseteq X$. To see that this is a semidiversity, note that for all $A,B,C$ with $B \neq \emptyset$ we have without loss of generality, 
\[(\delta_1 \vee \delta_2)(A \cup C) = \delta_1(A \cup C)\]
so that
\[(\delta_1 \vee \delta_2)(A \cup C)  \leq \delta_1(A \cup B) + \delta_1(B \cup C) \leq (\delta_1 \vee \delta_2)(A \cup B)+(\delta_1 \vee \delta_2)(B \cup C).\]

\begin{prop}~\label{prop:utility}
\begin{enumerate}
\item[(a)] Let $(X,\delta_1)$ and $(X,\delta_2)$ be Minkowski-embeddable semidiversities, and $\lambda > 0$. Then both $(X,\lambda \delta_1)$ and $(X,\delta_1 \vee \delta_2)$ are Minkowski-embeddable.
\item[(b)] Suppose that $K$ is a convex body in $\Re^d$ and $\phi:\Re^d \rightarrow \Re^d$ is a non-degenerate affine map. Then for all $A$ we have $R(\phi(A),\phi(K)) = R(A,K)$. Hence if there is an isometric embedding from $(X,\delta)$ into $(\Re^d,K)$ then there is also an isometric embedding from $(X,\delta)$ into $(\Re^d,\phi(K))$. 
\item[(c)] If $(X,\delta)$ is Minkowski-embeddable and $|A| = k+1$, $k \ge 2$, then
\begin{equation}
\delta(A) \leq \frac{k}{(k+1)(k-1)} \sum_{a \in A} \delta(A \setminus \{a\}).
\end{equation}
\end{enumerate}
\end{prop}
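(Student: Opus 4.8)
The plan is to reduce each part to a property of the generalized circumradius already recorded earlier, so that no new geometric input is needed. For part (a), fix isometric embeddings with $\delta_i(A) = R(\phi_i(A),K_i)$ for $i=1,2$. To embed $\lambda\delta_1$ I would keep $\phi_1$ and merely rescale the kernel: Proposition~\ref{jahn1}(e) gives $R(\phi_1(A),\frac{1}{\lambda}K_1) = \lambda R(\phi_1(A),K_1) = \lambda\delta_1(A)$, so $\frac{1}{\lambda}K_1$ is a witnessing kernel. To embed $\delta_1\vee\delta_2$ I would move to the product space $\Re^{d_1}\times\Re^{d_2}$, set $\phi(x)=(\phi_1(x),\phi_2(x))$, and take the kernel $K_1\times K_2$; since $\pi_1(\phi(A))=\phi_1(A)$ and $\pi_2(\phi(A))=\phi_2(A)$, Proposition~\ref{times1} yields $R(\phi(A),K_1\times K_2)=\max\{\delta_1(A),\delta_2(A)\}=(\delta_1\vee\delta_2)(A)$.

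For part (b), write the affine map as $\phi(x)=Mx+t$ with $M$ invertible. The crux is that $\phi$ sends a scaled translate of $K$ to a scaled translate of $\phi(K)$ with the \emph{same} scale factor, namely $\phi(\lambda K + x) = \lambda\phi(K) + (Mx+(1-\lambda)t)$. As $\phi$ is a bijection, $A\subseteq \lambda K + x$ holds if and only if $\phi(A)\subseteq \lambda\phi(K)+y$ with $y=Mx+(1-\lambda)t$, and $y$ ranges over all of $\Re^d$ as $x$ does; hence the two admissible sets of $\lambda$ coincide and $R(\phi(A),\phi(K))=R(A,K)$. The second assertion then follows by composing a given embedding $\psi$ of $(X,\delta)$ into $(\Re^d,K)$ with $\phi$, since $R(\phi(\psi(A)),\phi(K))=R(\psi(A),K)=\delta(A)$, so $\phi\circ\psi$ embeds $(X,\delta)$ into $(\Re^d,\phi(K))$.

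For part (c) I would apply Proposition~\ref{BrandenbergBound} directly to the image set. With $\delta(B)=R(\phi(B),K)$ and $|A|=k+1$, injectivity of $\phi$ on $A$ gives $|\phi(A)|=k+1$ and $\phi(A)\setminus\{\phi(a)\}=\phi(A\setminus\{a\})$, so Proposition~\ref{BrandenbergBound} yields
\[\delta(A)=R(\phi(A),K)\le \frac{k}{(k+1)(k-1)}\sum_{a\in A}R(\phi(A\setminus\{a\}),K)=\frac{k}{(k+1)(k-1)}\sum_{a\in A}\delta(A\setminus\{a\}).\]
The one genuine subtlety, and the step I expect to need the most care, is this injectivity: for a bona fide diversity it is automatic, since $\delta(\{a,a'\})>0$ forces $\phi(a)\neq\phi(a')$, but for a semidiversity $\phi$ may collapse points. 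In that case I would either restrict the statement to diversities or check that the bound survives when $|\phi(A)|<k+1$, treating the coincident points as a limit of distinct ones. Otherwise all three parts are short, the only real work being the choice of the correct ambient construction—kernel rescaling and the product-of-kernels trick in (a), the translation bookkeeping in (b), and the reduction to $\phi(A)$ in (c).
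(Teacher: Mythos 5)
Your proof is correct and follows essentially the same route as the paper: rescaling the kernel via Proposition~\ref{jahn1}(e) and taking the product kernel with Proposition~\ref{times1} for (a), the affine-image computation for (b), and a direct application of Proposition~\ref{BrandenbergBound} for (c). Your remark about injectivity of $\phi$ in part (c) is a point the paper silently glosses over; it is harmless where the proposition is actually used (for genuine diversities, where $\delta(\{a,a'\})>0$ forces injectivity), and your proposed perturbation-and-limit fix would handle the semidiversity case via Hausdorff continuity of $R(\cdot,K)$.
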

\begin{proof}
\noindent (a) 
There are maps $\phi_1:X \rightarrow \Re^p$ and $\phi_2:X \rightarrow \Re^q$ and convex bodies $K_1 \subset \Re^p$ and $K_2 \subset \Re^q$ such that $\delta_1(A) = R(\phi_1(A),K_1)$ and $\delta_2(A) = R(\phi_2(A),K_2)$ for all $A \subseteq X$. 


Then for all $A \subseteq X$ we have 
\[\lambda \delta_1(A) = R(\phi_1(A),\lambda^{-1} K_1)\]
and by Proposition~\ref{times1} 
\[(\delta_1 \vee \delta_2)(A) = R(\phi_1(A) \times \phi_2(A), K_1 \times K_2).\]
\noindent (b) If there are $\lambda \geq 0$ and $x$ such that $A + x\subseteq \lambda K$ then $\phi(A) + \phi(x) \subseteq \lambda \phi(K)$ so $R(\phi(A),\phi(K)) \leq R(A,K)$. Applying the inverse map gives equality.\\
\noindent (c) There is a map $\phi:X \rightarrow \Re^d$ and a convex body $K$ such that $R(\phi(A),K) = \delta(A)$. Applying Proposition~\ref{BrandenbergBound} gives the result.
\end{proof}

We now consider Minkowski-embeddability for a few key examples of symmetric diversities.

\begin{prop} \label{prop:special}
\begin{enumerate}
\item[(a)] The diversity $(X,\delta)$ with $\delta(A) = 1$ for all $A \subseteq X$ with $|A|>1$ is Minkowski-embeddable.
\item[(b)]
The diversity $(X,\delta)$ with $\delta(A) = |A|-1$ for all non-empty $A \subseteq X$ is Minkowski-embeddable.
\item[(c)] Any diversity $(X,\delta)$ with $X =\{a,b,c\}$ and $\delta(\{a,b\}) = \delta(\{a,c\}) = \delta(\{b,c\}) = 1$  is Minkowski-embeddable. 
\end{enumerate}
\end{prop}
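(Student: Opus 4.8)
The plan is to handle the three parts separately, deriving (c) from (a) and (b) together with the closure properties in Proposition~\ref{prop:utility}(a). Throughout write $n = |X|$ and let $e_1,\ldots,e_n$ be the standard basis of $\Re^n$. For both (a) and (b) I would embed $X$ onto the basis vectors via $\phi(x_i) = e_i$ and only vary the kernel.

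For (a) I would take the kernel to be the unit cube $K = [0,1]^n$. Since a translate $\lambda K + x$ is the box $\prod_i [x_i, x_i+\lambda]$, a bounded set is covered precisely when $\lambda$ is at least the range of each coordinate, so $R(\phi(A),K)$ equals the largest coordinate range of $\phi(A)$. For any $A$ with $|A| \geq 2$ the chosen points have coordinates in $\{0,1\}$, so every range is at most $1$, while the coordinate indexed by any one chosen point has range exactly $1$; hence $R(\phi(A),K) = 1$, as required.

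The substance of the proposition is part (b), and identifying the correct kernel is the main obstacle. I would keep $\phi(x_i) = e_i$ but take the kernel to be the reflected standard simplex $K = \conv\{-e_1,\ldots,-e_n\}$. All computations take place in the affine hull of the $e_i$, a translate of the hyperplane $\{x : \sum_j x_j = 1\}$, which I identify with $\Re^{n-1}$; there a translate of $K$ is a genuine convex body and $R$ may be computed within this subspace. For a $k$-subset $A = \{e_{i_1},\ldots,e_{i_k}\}$ one unwinds the containment $A \subseteq \lambda K + x$: writing a point of $\lambda K + x$ as $x - \lambda\sum_i \nu_i e_i$ with $\nu_i \geq 0$ and $\sum_i \nu_i = 1$ shows that $e_j \in \lambda K + x$ forces $x_i \geq (e_j)_i$ for every coordinate $i$. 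Imposing this for all chosen $j$ gives $x_i \geq 1$ on each chosen coordinate and $x_i \geq 0$ elsewhere, while remaining in the correct hyperplane forces $\sum_i x_i = 1 + \lambda$. Minimizing $\lambda = \sum_i x_i - 1$ subject to these bounds yields the optimum $x_i = 1$ on the $k$ chosen coordinates and $x_i = 0$ elsewhere, so $\lambda = k-1$. Thus $R(\phi(A),K) = |A| - 1$, proving (b).

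Finally, for (c) I would first observe that monotonicity (from $\{a,b\} \subseteq \{a,b,c\}$) and the inequality (D2) applied with $A = \{a,b\}$, $B = \{b\}$, $C = \{b,c\}$ force the single free value $t := \delta(\{a,b,c\})$ to satisfy $1 \leq t \leq 2$. Let $\delta'$ be the constant diversity of part (a) and $\delta''$ the diversity of part (b), each restricted to $X = \{a,b,c\}$; both have pairwise value $1$, and triple values $1$ and $2$ respectively. Then $\delta = \delta' \vee \frac{t}{2}\,\delta''$ reproduces the given diversity, since on pairs it equals $\max\{1, t/2\} = 1$ (as $t \leq 2$) and on the triple it equals $\max\{1, t\} = t$ (as $t \geq 1$). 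As $\delta'$ and $\delta''$ are Minkowski-embeddable by (a) and (b), Proposition~\ref{prop:utility}(a) (closure under positive scaling and under $\vee$) shows that $\delta$ is Minkowski-embeddable, completing the proof.
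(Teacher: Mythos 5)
Your proof is correct and follows the same overall strategy as the paper: exhibit explicit kernels for (a) and (b) with $X$ mapped to standard basis vectors, then obtain (c) as $\delta' \vee \frac{t}{2}\delta''$ via Proposition~\ref{prop:utility}(a) --- part (c) is essentially word-for-word the paper's argument. The differences are local. For (a) the paper uses the simplex $\conv\{0,e_1,\ldots,e_n\}$ in $\Re^{|X|-1}$ with $X$ sent to its $|X|$ vertices, whereas you use the unit cube in $\Re^{|X|}$; both work immediately, and the cube has the small advantage that $R$ is visibly the maximum coordinate range. For (b) the paper takes the reflected vertex set of the same simplex and outsources the identity $R(V',K)=|V'|-1$ to the proof of Theorem 4.1 of Brandenberg--K\"onig, while you compute it directly for $K=\conv\{-e_1,\ldots,-e_n\}$, which makes the proof self-contained. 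Two points there deserve one extra line each, though neither is a gap in substance: first, your $K$ has empty interior in $\Re^n$, so the passage to the affine hull is essential --- you do state it, but you should note that any translate $\lambda K+x$ containing a point of $A$ must lie in the hyperplane $\{y:\sum_j y_j=1\}$, so the computation genuinely reduces to a convex body in $\Re^{n-1}$; second, ``minimizing $\lambda$ subject to these bounds'' uses not only that $x_i\geq (e_j)_i$ and $\sum_i x_i=1+\lambda$ are necessary but that they are sufficient for $e_j\in\lambda K+x$ (take $\nu_i=(x_i-(e_j)_i)/\lambda$), which is what guarantees the lower bound $\lambda\geq k-1$ is attained.
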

\begin{proof}
 (a)  Let $n = |X| - 1$. Let $K$ be the simplex with vertex set $V$ given by the standard basis vectors in $\Re^n$ together with $0$. Then for non-singleton subset $V' \subseteq V$ we have $R(V',K) = 1$. Hence any bijection from $X$ to $V$ gives an isometric embedding.\\
(b) Let $K$ be the same simplex as in (a) and now let $V$ be the vertex set of $-K$. The proof of Theorem 4.1 in \cite{BrandenbergKonig13} then gives $R(V',K) = |V'|-1$ for all non-empty subsets of $V' \subseteq V$. The result follows. \\
(c) Let $x = \delta(\{a,b,c\})$. As $(X,\delta)$ is a diversity, $1 \leq x \leq 2$. Let $(X,\delta_1)$ be the diversity on $X$ with $\delta_1(A) = 1$ for all non-singleton $A \subseteq X$ and let $(X,\delta_2)$ be the diversity on $X$ with $\delta_2(A) = |A|-1$ for all non-empty $A \subseteq X$. Then $(X,\delta_1)$ and $(X,\delta_2)$ are Minkowski-embeddable, and by Proposition~\ref{prop:utility} (a) so is 
\[(X,\delta) = (X, \delta_1 \vee (\sfrac{x}{2} \delta_2)).\]
\end{proof}

We now give an exact characterization for when a finite symmetric diversity is Minkowski-embeddable.

     \begin{thm} \label{GCRSymmetric}
         Let $(X,\delta)$ be a finite symmetric diversity. Then $(X,\delta)$ is Minkowski-embeddable if and only if 
         \begin{equation}
              \frac{\delta(A \setminus \{a\})}{\delta(A)} \geq \frac{|A|-2}{|A|-1} \label{symmMink}
          \end{equation}
         for all $A \subseteq X$ with $|A| \geq 2$, $a \in A$.
     \end{thm}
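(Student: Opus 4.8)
The plan is to handle the two directions by completely different means: necessity drops out of a bound already in hand, while sufficiency calls for an explicit construction assembled from the embeddable examples of Proposition~\ref{prop:special} using the closure operations of Proposition~\ref{prop:utility}(a). Since $(X,\delta)$ is symmetric I would first encode it as a sequence, writing $g_k = \delta(A)$ whenever $|A| = k$, so that $g_0 = g_1 = 0$, $g_k > 0$ for $k \ge 2$, and $g$ is non-decreasing by monotonicity of diversities. Setting $h_k = g_k/(k-1)$ for $k \ge 2$, a one-line rearrangement shows that \eqref{symmMink} is equivalent to $h_2 \ge h_3 \ge \cdots \ge h_{|X|} > 0$; that is, $g_k/(k-1)$ is non-increasing in $k$ (the case $|A|=2$ of \eqref{symmMink} being automatic, as $g_1 = 0$). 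Necessity is then immediate from Proposition~\ref{prop:utility}(c): taking $|A| = k+1$ and using symmetry, its right-hand side collapses to $\frac{k}{(k+1)(k-1)}(k+1)g_k = \frac{k}{k-1}g_k$, giving $g_{k+1}\le \frac{k}{k-1}g_k$, i.e.\ $\frac{g_k}{g_{k+1}} \ge \frac{k-1}{k}$, which is precisely \eqref{symmMink} for sets of size $k+1$.

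For sufficiency the key object is the \emph{capped} function $\sigma_j$, defined for $2 \le j \le |X|$ by $\sigma_j(A) = \min(|A|, j) - 1$. I would first verify the max-decomposition
\[ \delta = \bigvee_{j=2}^{|X|} h_j\,\sigma_j \]
pointwise at a set of size $k$: the term $j=k$ contributes $h_k(k-1) = g_k$, each term $j > k$ contributes $(k-1)h_j \le g_k$ because $h_j \le h_k$, and each term $j < k$ contributes $(j-1)h_j = g_j \le g_k$ because $g$ is non-decreasing. Thus the non-increasing property of $h$ and the monotonicity of $\delta$ are exactly what make this identity hold.

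It remains to show each $\sigma_j$ is Minkowski-embeddable, and this is the step I expect to be the main obstacle: $\sigma_j$ is a capped copy of the embeddable diversity $A \mapsto |A|-1$ of Proposition~\ref{prop:special}(b), but capping is a \emph{minimum}, whereas Proposition~\ref{prop:utility}(a) only lets us glue embeddable diversities together by \emph{maxima}. I would get around this by realizing $\sigma_j$ as a maximum over the $j$-element subsets of $X$. Fix $S \subseteq X$ with $|S| = j$ and a distinguished element $s_0 \in S$; use the embedding of Proposition~\ref{prop:special}(b) to place the points of $S$ at the vertices $0, -e_1, \dots, -e_{j-1}$ of $\Re^{j-1}$ with kernel the standard simplex $\Delta_{j-1}$, and send every point of $X \setminus S$ to the image $0$ of $s_0$. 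Writing $\delta^S(A) = R(\phi_S(A), \Delta_{j-1})$ for the resulting (semidiversity) embedding and using $R(V',\Delta_{j-1}) = |V'|-1$, a short case analysis on whether $A \subseteq S$, whether $s_0 \in A$, and whether $A$ meets $X \setminus S$ gives $\delta^S(A) \le \sigma_j(A)$ in every case, with equality attained for a suitable $S$ --- take $S \supseteq A$ when $|A| \le j$, and $S \subseteq A$ when $|A| > j$. Hence $\sigma_j = \bigvee_{|S| = j} \delta^S$, a maximum of Minkowski-embeddable semidiversities.

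Finally I would combine the two displays into $\delta = \bigvee_{j}\bigvee_{|S|=j} h_j\,\delta^S$, exhibiting $\delta$ as a finite maximum of positive scalings of Minkowski-embeddable semidiversities, and conclude by iterating Proposition~\ref{prop:utility}(a). The one place demanding care is the case analysis for $\delta^S$: collapsing $X \setminus S$ onto $s_0$ can raise $R(\phi_S(A),\Delta_{j-1})$ by one exactly when $A$ meets $S$, avoids $s_0$, and also contains a point outside $S$, in which case $\delta^S(A) = |A \cap S|$; I would check that even there $|A \cap S| \le \min(|A|-1, j-1) = \sigma_j(A)$, so the bound $\delta^S \le \sigma_j$ is preserved and the maximum is unaffected.
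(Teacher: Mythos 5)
Your proof is correct, and while the necessity direction is identical to the paper's (both apply Proposition~\ref{prop:utility}(c) and let symmetry collapse the sum), your sufficiency argument takes a genuinely different route. The paper proceeds by induction on $|X|$, adding one point $x_m$ at a time: it writes $\delta|_{X_m}$ as the maximum of $m-1$ semidiversities obtained by identifying $x_m$ with each $x_i$ in the inductively supplied embedding of $X_{m-1}$, together with one copy of the $(|A|-1)$-diversity scaled by $f(m-1)/(m-1)$; condition \eqref{symmMink} enters through a telescoping product showing that this scaled linear term never exceeds $f(|A|-1)$ on proper subsets. You instead give a direct, non-inductive decomposition $\delta = \bigvee_j h_j\,\sigma_j$ into capped counting diversities, with \eqref{symmMink} appearing as monotonicity of $h_j = g_j/(j-1)$, and then realize each cap $\sigma_j$ as a maximum of collapsed simplex embeddings over $j$-element subsets $S$ --- your case analysis for $\delta^S \le \sigma_j$ (including the delicate case where $A$ meets $S$, misses $s_0$, and exits $S$) checks out, as does the attainment of equality by choosing $S \supseteq A$ or $S \subseteq A$. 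The trade-off: your construction yields a completely explicit embedding (a product of simplices indexed by all pairs $(j,S)$, at the cost of large dimension) and makes the role of \eqref{symmMink} transparent, whereas the paper's induction is more economical to state but leaves the final kernel implicit in the recursion. The only cosmetic point is that $\sigma_j(\emptyset)$ should be set to $0$ rather than $\min(0,j)-1=-1$, which does not affect the argument.
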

       \begin{proof}
      Suppose that $(X,\delta)$ is Minkowski-embeddable,  that $A \subseteq X$, $|A| \geq 2$ and $a \in A$.
      
      If $|A| = 2$ then \eqref{symmMink} holds trivially. If $|A| > 2$ then by Proposition~\ref{prop:utility} (c) 
      \begin{align*}
          \delta(A) &\leq \frac{|A| - 1} {|A| (|A|-2)} \sum_{x \in A} \delta(A \setminus \{x\}) \\
          &= \frac{|A|-1}{|A|(|A|-2) } |A| \delta(A \setminus \{a\}),
      \end{align*}
      where the last line follows since $\delta$ is symmetric. Hence \eqref{symmMink} holds.
      
      Conversely, suppose that \eqref{symmMink} holds for all $A \subseteq X$ such that $|A| \geq 2$ and $a \in A$. As $(X,\delta)$ is symmetric, $\delta$ is monotonic and there is an increasing function $f:\mathbb{Z} \rightarrow \Re_{\ge 0}$ such that $\delta(A) = f(|A| - 1)$ for all non-empty $A \subseteq X$. Note that for each $k =2, \ldots, |X|$, choosing an $A \subseteq X$ with $|A|=k$ and substituting $A$ into  \eqref{symmMink}  gives
      \begin{equation}
          \frac{f(k-2)}{f(k-1)} \geq \frac{k-2}{k-1}, \label{MinkowskiBound}
      \end{equation}
      a fact that we shall use later on in the proof.

      Let $X = \{x_1,x_2,\ldots,x_n\}$ and for each $m \leq n$ define $X_m = \{x_1,x_2,\ldots,x_m\}$.  
      We will use induction on $m$ to show that for each $m \leq n$ the symmetric diversity $(X,\delta)$ restricted to $X_m$ is Minkowski-embeddable. This is clearly the case when $m \leq 2$.

      Suppose that $2<m \leq n$ and that $(X,\delta)$ restricted to $X_{m-1}$ is Minkowski-embeddable.
      Then there is a map $\phi:X_{m-1} \rightarrow \Re^d$ for some $d$ and a convex body $K \subseteq \Re^d$ such that 
      \[\delta_K(\phi(A)) = \delta(A)\]
      for all $A \subseteq X_{m-1}$.
      We now 
      define a collection $\delta^{(i)}$ of (semi)diversities on $X_m$, $1\le i \le m$. 
      
      First, 
      for each $i = 1,2,\ldots,m-1$  define the map $\phi^{(i)}:X_m \rightarrow \Re^d$ by $\phi^{(i)}(x) = \phi(x)$ if $x \in X_{m-1}$ and $\phi^{(i)}(x_m) = \phi(x_i)$. We then define 
      $(X_m,\delta^{(i)})$ by 
      \[\delta^{(i)}(A) = \delta_K(\phi^{(i)}(A)).\]
      Since $(\Re^d,\delta_K)$ is a diversity, 
      $(X_m,\delta^{(i)})$ satisfies (D1') and (D2), and so 
      $(X_m,\delta^{(i)})$ is a Minkowski-embeddable semidiversity. Moreover, the definitions of $\phi$ and $\phi^{(i)}$ give  
      \[ \delta^{(i)}(A) = \begin{cases} f(|A|-2) & \mbox{ if $x_i,x_m \in A$ } \\ f(|A|-1) & \mbox{ otherwise.} \end{cases}\]
      
      Second, let $(X_m,\delta^{(m)})$ be the diversity defined by setting
      \[\delta^{(m)}(A) = \frac{|A|-1}{m-1} f(m-1)\]
      for all non-empty $A \subseteq X_m$. This is  Minkowski-embeddable by Proposition~\ref{prop:special} (b)  and  Proposition~\ref{prop:utility} (a). 
      
      We now claim that
      \begin{equation} \delta(A) = \max\{\delta^{(i)}(A):i=1,\ldots,m\} \label{deltaIsMax} \end{equation}
      holds for all $A \subseteq X_m$. Proving this claim will complete
      the proof of the theorem by induction since each (semi)diversity $\delta^{(i)}$ is Minkowski-embeddable, and hence
      by Proposition~\ref{prop:utility} (a) $(X_m,\delta)$ is Minkowski-embeddable. 
      
      When $|A| \leq 1$, \eqref{deltaIsMax} holds trivially, as all relevant quantities are zero. Suppose $|A| \geq 2$. Three cases may hold: 
      \begin{itemize}
          \item $x_i \not \in A$ for some $i=1,\ldots,m-1$; then $\delta^{(i)}(A) = f(|A|-1)$.
          \item $x_m \not \in A$; then $\delta^{(i)}(A) = f(|A|-1)$ for all $i=1,\ldots,m-1$.
          \item $A = X_m$; then $\delta^{(i)}(A) = f(|A|-2)$ for all $i=1,\ldots,m-1$.
      \end{itemize}
      Hence
      \[\max\{\delta^{(i)}(A):i=1,2,\ldots,m-1\} = \begin{cases} f(|A|-1) & \mbox{ if $A \neq X_m$;} \\ f(|A|-2) & \mbox{ if $A=X_m$.} \end{cases}\]
      
      We now consider $\delta^{(m)}$ and $A \subseteq X_m$. If $A = X_m$ then 
      \[\delta^{(m)}(A)  = \frac{(|A|-1)}{(m-1)} f(m-1)  = f(m-1).\]
       Otherwise suppose $1<|A| < m$ and so
      \begin{align*}
          \delta^{(m)}(A) & = \frac{(|A|-1)}{(m-1)} f(m-1) ,\\ 
          & =  \frac{(|A|-1)}{(m-1)}  \left( \prod_{k=|A|+1}^m \frac{f(k-1)}{f(k-2)} \right) f(|A|-1), \\
          & \leq  \frac{(|A|-1)}{(m-1)}  \left( \prod_{k=|A|+1}^m \frac{k-1}{k-2} \right) f(|A|-1) & \mbox{ by \eqref{MinkowskiBound}, }\\
          & =  f(|A|-1).
      \end{align*}
      Hence if $A \neq X_m$, 
      \[\max\{\delta^{(i)}(A):i=1,2,\ldots,m\}  = \max\{f(|A|-1),\delta^{(m)}(A)\} = f(|A|-1),\]
      while if $A= X_m$ then since $f$ is increasing
      \[\max\{\delta^{(i)}(A):i=1,2,\ldots,m\}  = \max\{f(|A|-2),\delta^{(m)}(A)\} = f(|A|-1).\]      
We conclude that $\delta(A) = \max\{\delta^{(i)}(A):i=1,\ldots,m\}$
      for all $A \subseteq X_m$ which completes the proof of \eqref{deltaIsMax} and also the theorem.
  \end{proof}

By considering the diversity on $X = \{a,b,c,d\}$ with 
\[\delta(A) = \begin{cases} 2, & |A| = 4, \\ 1, & 2 \leq |A| \leq 3, \\ 0, & \mbox{otherwise,} \end{cases} \]
and taking $A=\{b,c,d\}$ in Theorem~\ref{GCRSymmetric} we immediately obtain

\begin{cor}\label{4pt}
	There exists a diversity a set of four elements that is not Minkowski-embeddable.
\end{cor}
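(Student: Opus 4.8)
The plan is to exhibit the explicit four-point diversity displayed above and show that it violates the necessary condition \eqref{symmMink} of Theorem~\ref{GCRSymmetric}. Concretely, take $X = \{a,b,c,d\}$ with $\delta(A) = 2$ when $|A| = 4$, $\delta(A) = 1$ when $2 \le |A| \le 3$, and $\delta(A) = 0$ otherwise, and then apply Theorem~\ref{GCRSymmetric}.

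First I would check that $(X,\delta)$ really is a finite symmetric diversity, since Theorem~\ref{GCRSymmetric} requires this hypothesis. Symmetry and (D1) are immediate from the definition, as $\delta(A)$ depends only on $|A|$ and vanishes exactly on sets of cardinality at most one. The only point requiring work is the triangle inequality (D2), which I would verify by a short case analysis on the value of $\delta(A \cup C)$. The only nontrivial case is $\delta(A \cup C) = 2$, i.e.\ $A \cup C = X$; here one observes that if $\delta(A \cup B) + \delta(B \cup C) < 2$ then one of $A \cup B$, $B \cup C$ would have to be a singleton, which (using $B \neq \emptyset$) forces the other set to equal $X$ and hence contribute $2$, a contradiction. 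The cases $\delta(A \cup C) \in \{0,1\}$ are immediate from monotonicity.

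With the diversity axioms confirmed, the corollary reduces to a one-line arithmetic check. Taking $A = X$ and deleting any single element (say $a$, so that $A \setminus \{a\} = \{b,c,d\}$), the left-hand side of \eqref{symmMink} is $\delta(\{b,c,d\})/\delta(X) = 1/2$, whereas the right-hand side is $(|X| - 2)/(|X| - 1) = 2/3$. Since $1/2 < 2/3$, inequality \eqref{symmMink} fails, and Theorem~\ref{GCRSymmetric} then yields that $(X,\delta)$ is not Minkowski-embeddable, proving the corollary.

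The main (and essentially only) obstacle is the verification of (D2) for the proposed $\delta$; once that is in place, the failure of Minkowski-embeddability is an immediate consequence of Theorem~\ref{GCRSymmetric}.
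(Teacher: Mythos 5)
Your proof is correct and is essentially the paper's own argument: the paper obtains the corollary by applying Theorem~\ref{GCRSymmetric} to exactly this four-point symmetric diversity, where deleting a point from $X$ gives the ratio $1/2 < 2/3$. The only addition is your explicit verification of (D2), which the paper leaves implicit; your case analysis there is sound.
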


We show later (Corollary~\ref{NegConsequences}) that {\em every} diversity on three elements is Minkowski-embeddable. 
 
\section{Parallelotopes and simplices}

We have shown that not every diversity is Minkowski-embeddable, and so 
the question now becomes one of characterizing which diversities are. 
In this section we characterize when we can embed
the diameter and negative-type diversities defined in the introduction
in terms of Minkowski diversities having kernels equal to parallelotopes and simplices, respectively.

We first consider diameter diversities.

\begin{thm} \label{DCMDiameter}
A finite diversity $(X,\delta)$ can be embedded in a Minkowski diversity with kernel equal to some parallelotope if and only if $(X,\delta)$ is a diameter diversity.
\end{thm}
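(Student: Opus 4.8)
The plan is to show that, after a non-degenerate affine change of coordinates, the generalized circumradius with respect to a parallelotope is exactly half the $\ell_\infty$-diameter, from which both implications follow quickly. Since every parallelotope is a non-degenerate affine image of the cube $[-1,1]^d$, Proposition~\ref{prop:utility}(b) lets me assume throughout that the kernel is $K = [-1,1]^d$. Writing $[-1,1]^d = [-1,1] \times [-1,1]^{d-1}$ and iterating Proposition~\ref{times1}, I would obtain
\[ R(A,[-1,1]^d) = \max_{1 \le i \le d} R(\pi_i(A),[-1,1]). \]
A single factor is elementary: for bounded $S \subseteq \Re$, the quantity $R(S,[-1,1])$ is the least $\lambda$ for which $S$ fits in an interval of length $2\lambda$, namely $\frac12(\sup S - \inf S)$. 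Applying this coordinatewise gives
\[ R(A,[-1,1]^d) = \tfrac12 \max_{a,a' \in A} \|a-a'\|_\infty, \]
which is the computation underlying the observation of \cite{BrandenbergKonig13}.

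For the forward implication, suppose $\phi : X \to \Re^d$ and a parallelotope $K$ realize the embedding. After the affine reduction, the displayed formula shows that $\delta_K(\phi(A)) = \max_{a,a' \in A} \tfrac12\|\phi(a)-\phi(a')\|_\infty = \max_{a,a'\in A}\delta_K(\{\phi(a),\phi(a')\})$. Pulling this back through $\phi$ gives $\delta(A) = \max_{a,a' \in A}\delta(\{a,a'\})$, so $(X,\delta)$ is a diameter diversity.

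For the converse, suppose $(X,\delta)$ is a diameter diversity, so that $\delta$ is determined by the induced metric $d(x,y) = \delta(\{x,y\})$. Here I would invoke the classical Fréchet (Kuratowski) embedding of a finite metric space into $\ell_\infty$: writing $X = \{x_1,\dots,x_n\}$, the map $\psi(x) = (d(x,x_1),\dots,d(x,x_n))$ satisfies $\|\psi(x)-\psi(y)\|_\infty = d(x,y)$, with the triangle inequality giving the upper bound on each coordinate and the coordinate $i$ with $x_i = y$ giving equality. Taking $\phi = 2\psi$ and kernel $K = [-1,1]^n$, the displayed formula yields $R(\phi(A),K) = \max_{a,a'\in A} d(a,a') = \delta(A)$, so $(X,\delta)$ is Minkowski-embeddable with parallelotope kernel.

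The argument is mainly a matter of assembling existing pieces—Proposition~\ref{times1}, Proposition~\ref{prop:utility}(b), and the Fréchet embedding—rather than overcoming a single difficult step. The one point requiring care is the reduction of an arbitrary parallelotope to the standard cube together with the clean coordinatewise evaluation of the circumradius, which is precisely where the $\ell_\infty$ structure emerges and makes both directions transparent.
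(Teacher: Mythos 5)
Your proof is correct and follows essentially the same route as the paper: the converse is the identical Fr\'echet-embedding-into-a-cube argument (the paper uses the unit cube so no factor of $2$ appears, but that is cosmetic). The only difference is that for the forward direction the paper simply cites Proposition 3.4 of \cite{BrandenbergKonig13}, whereas you derive the formula $R(A,[-1,1]^d)=\tfrac{1}{2}\max_{a,a'\in A}\|a-a'\|_\infty$ yourself from Proposition~\ref{prop:utility}(b), Proposition~\ref{times1} and the one-dimensional computation, which makes the argument self-contained.
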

\begin{proof}
First note that if there is some such embedding then $(X,\delta)$ is a diameter diversity by Proposition 3.4 of \cite{BrandenbergKonig13}.

Conversely, suppose that  $X = \{x_1,x_2,\ldots,x_n\}$ and 
 $(X,\delta)$ is a diameter diversity. Let $\phi:X \rightarrow \Re^n$ be the standard Fr\'echet embedding 
\[\phi:X \rightarrow \Re^n: y \mapsto (d(x_1,y), d(x_2,y), \ldots,d(x_n,y))\]
of the metric $d$ induced by $\delta$.
Then $d(x,y) = \| \phi(x) - \phi(y) \|_\infty$ for all $x,y \in X$. 

Let $K$ be the unit cube in $\Re^n$. For all $A \subseteq X$ we have 
\begin{align*}
\delta(A) & = \max_{a,b \in A} d(a,b) \\
& = \max_{a,b \in A} \| \phi(a) - \phi(b) \|_\infty \\
& = \max_i \max_{a,b \in A} | \phi(a)_i - \phi(b)_i | \\
& = R(\phi(A),K).
\end{align*}
\end{proof}

We now consider finite diversities of negative type,
the diversity analog of metrics of negative type \cite{DezaLaurent97}. 
Note that the cone of all diversities on a set $X$ of cardinality $n$ has dimension $2^{n} - (n+1)$, the number of subsets $A \subset X$ with $|A| \geq 2$, see \cite{WuBryantEtal19}. The set of diversities of negative type forms a cone of the same dimension, indicating that an appropriately chosen `random' diversity could have non-negative type with non-zero probability.


\begin{thm} \label{NegativeType}
A finite diversity $(X,\delta)$ can be embedded in a Minkowski diversity with kernel equal to some simplex if and only if $(X,\delta)$ has negative type.
\end{thm}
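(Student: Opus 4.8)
The plan is to prove both directions through a single explicit formula for the circumradius with respect to a simplex, and then to read off the negative-type condition as exactly the obstruction to realizing that formula. By Proposition~\ref{prop:utility}(b) every simplex is, up to a non-degenerate affine map (which preserves $R$), the regular $d$-simplex, so I may assume $K$ is regular and centered at the origin. Writing $u_0,\dots,u_d$ for its outward facet normals, normalized so that the support function satisfies $h_K(u_i)=1$, symmetry gives $\sum_i u_i = \bzero$. Since $A\subseteq \lambda K + x$ holds iff $\max_{a\in A}\langle u_i,a\rangle \le \lambda + \langle u_i,x\rangle$ for every $i$, and since the only linear relation among the $u_i$ is $\sum_i u_i=\bzero$, minimizing $\lambda$ over admissible $x$ is a one-line linear program whose value is
\[ R(A,K) = \frac{1}{d+1}\sum_{i=0}^d \max_{a\in A}\langle u_i,a\rangle. \]
Thus a simplex circumradius is an average of \emph{max-functionals} $A\mapsto \max_{a\in A}\ell(a)$, one per facet direction $\ell=\langle u_i,\cdot\rangle$, and the relation $\sum_i u_i=\bzero$ is precisely what makes the average vanish on singletons.

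Necessity follows immediately. With $\ell_i=\langle u_i,\phi(\cdot)\rangle$ and $M_i(A)=\max_{a\in A}\ell_i(a)$ we have $\delta(A\cup B)=\frac{1}{d+1}\sum_i \max\{M_i(A),M_i(B)\}$. Using $\max\{s,t\}=\tfrac12(s+t+|s-t|)$, the $s+t$ part of $\sum_{A,B}x_Ax_B\max\{M_i(A),M_i(B)\}$ cancels because $x$ is zero-sum, leaving $\tfrac12\sum_{A,B}x_Ax_B|M_i(A)-M_i(B)|$; this is a negative-type form for the metric $|s-t|$ on the real line and is therefore $\le 0$. Summing over $i$ gives $\sum_{A,B}x_Ax_B\delta(A\cup B)\le 0$, so $(X,\delta)$ has negative type.

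For sufficiency I would reverse the formula: it suffices to show that every negative-type diversity can be written as a finite nonnegative combination $\delta(A)=\sum_j \max_{a\in A}\ell_j(a)$ of max-functionals whose total vanishes on singletons, i.e.\ $\sum_j \ell_j(a)=0$ for all $a$. Given such a representation, set $\phi(a)=(\ell_1(a),\dots,\ell_n(a))$, which lands in the hyperplane $H=\{z:\sum_i z_i=0\}\subseteq\Re^n$, and take $K=\{z\in H : z_i\le 1 \text{ for all }i\}$, a regular simplex in $H\cong\Re^{n-1}$; the same linear-program computation as above gives $R(\phi(A),K)=\frac1n\sum_j\max_{a\in A}\ell_j(a)$, so a suitable scaling of $K$ realizes $\delta$ (using Proposition~\ref{jahn1}(e)). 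The existence of the representation is the crux, and I would obtain it by convex duality. The negative-type diversities form a closed convex cone $\mathcal N$ cut out by the inequalities $\langle q_x,\delta\rangle\le0$, where $q_x(C)=\sum_{A\cup B=C}x_Ax_B$, while the max-functional combinations form a convex cone $\mathcal S$, and necessity shows $\mathcal S\subseteq\mathcal N$. If some $\delta\in\mathcal N$ were not in $\overline{\mathcal S}$, a separating functional $y$ would lie in the polar of the max-functional cone, meaning $\sum_C y_C \max_{a\in C}\ell(a)\le 0$ for every $\ell$, yet $\langle y,\delta\rangle>0$. The key lemma I would prove is that this polar cone is generated exactly by the zero-sum forms $q_x$ (note $\langle q_x,g_\ell\rangle\le0$ for all $\ell$ is just the necessity computation above); identifying $y$ with such a $q_x$ then yields $\langle q_x,\delta\rangle>0$, contradicting $\delta\in\mathcal N$. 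Finally I would check that $\mathcal S$ is already closed, via a Carath\'eodory/compactness argument after normalizing the generators, so that $\mathcal N=\mathcal S$ exactly and no limiting step is needed.

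The main obstacle is precisely this sufficiency step: converting the ``soft'' negative-type inequalities into the explicit geometric decomposition into max-functionals. The delicate parts are (i) showing that the polar of the max-functional cone is generated by the zero-sum forms $q_x$ (equivalently, that these two families of inequalities are polar to one another), and (ii) establishing the closedness of $\mathcal S$ so the characterization is an exact equality rather than merely holding in the closure. Both are where I expect the real work to lie, and where I anticipate drawing on the cone-theoretic results for negative-type diversities in \cite{WuBryantEtal19}; an alternative route for (i) would be a Schoenberg-type representation of negative-type diversities, from which the functionals $\ell_j$ could be extracted directly.
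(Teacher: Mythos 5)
Your derivation of the formula $R(A,K)=\frac{1}{d+1}\sum_i\max_{a\in A}\langle u_i,a\rangle$ for a regular simplex is correct, and your necessity argument (splitting $\max\{s,t\}=\tfrac12(s+t+|s-t|)$ and using that the line metric has negative type) is a valid, self-contained alternative to the paper's route; the paper instead deduces \emph{both} directions from Theorem 7 of \cite{WuBryantEtal19}, which states that $(X,\delta)$ has negative type iff it embeds in $(\Re^d,\delta_{neg})$, and then verifies by a direct computation that $\delta_{neg}(A)=R(A,K)$ for the explicit simplex $K=-\conv(0,e_1,\dots,e_d)$.

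The genuine gap is in your sufficiency direction. Your ``key lemma'' --- that every negative-type diversity is a finite nonnegative combination of max-functionals $\max_{a\in A}\ell_j(a)$ with $\sum_j\ell_j=0$ --- is not a technical detail to be filled in by routine duality: note that $\delta_{neg}(A)=\sum_{i}\max_{a\in A}a_i+\max_{a\in A}(-\sum_i a_i)$ is exactly such a combination, so your lemma is equivalent to (the hard half of) the cited Theorem 7 of \cite{WuBryantEtal19}, and you have not proved it. Moreover, the duality sketch as written is not sound: a functional $y$ separating $\delta$ from your cone $\mathcal{S}$ of \emph{zero-sum} combinations only satisfies $\langle y,g\rangle\le 0$ for $g\in\mathcal{S}$; this does not give $\sum_C y_C\max_{a\in C}\ell(a)\le 0$ for every individual linear $\ell$, since single max-functionals do not lie in $\mathcal{S}$ (they fail to vanish on singletons). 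So the claimed identification of the polar of $\mathcal{S}$ with the cone generated by the forms $q_x$ does not follow from the stated separation, and the closedness of $\mathcal{S}$ is likewise only asserted. As it stands, the proposal establishes only the ``only if'' direction; to complete the ``if'' direction you would either need to carry out the polar-cone identification correctly (working in the subspace of set functions vanishing on singletons) or simply invoke Theorem 7 of \cite{WuBryantEtal19} as the paper does.
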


\begin{proof}
Let $n=|X|$. From Theorem 7 of \cite{WuBryantEtal19}, $(X,\delta)$ is negative-type if and only if it can be embedded in $(\Re^d,\delta_{neg})$ for some $d$ and 
\[\delta_{neg}(A) = \sum_{i=1}^d \max\{a_i: a \in A\} - \min\left\{\sum_{i=1}^d a_i:a\in A\right\}.\]

Define the polytope 
\begin{align*}
K &= -\mathrm{conv}(0,e_1,e_2,\ldots,e_d)\\
& = \{ x \leq  0 : \sum_{i=1}^d x_i \geq -1\}.
\end{align*}
Suppose $\lambda = R(A,K)$, so there is $z$ such that $A  \subseteq \lambda K + z$. Then for all $a \in A$ and all $i$ we have $a_i - z_i \leq 0$. Hence 
\[z_i \geq \max\{a_i:a \in A\}.\]
Let $a^* \in A$ minimize $\sum_{i=1}^d a^*_i$. Then $a^* \in \lambda K + z$ implies $\sum_{i=1}^d (a^*_i - z_i) \geq -\lambda$ and  so
\begin{align*}
\lambda & \geq \sum_{i=1}^d z_i  - \sum_{i=1}^d a^*_i \\ 
&\geq \sum_{i=1}^d \max\{a_i:a \in A\} - \min_{a \in A} \sum_{i=1}^d a_i \\
& = \delta_{neg}(A). 
\end{align*}
Now suppose $\lambda = \delta_{neg}(A)$. Let $z_i = \max\{a_i:a \in A\}$ so that $a_i-z_i \leq 0$ for all $a \in A$ and all $i=1,\ldots,d$. 
Furthermore
\begin{align*}
-\lambda & = -\sum_{i=1}^d z_i + \min\left\{\sum_{i=1}^d a_i:a\in A\right\} \\
& \leq -\sum_{i=1}^d z_i + \sum_{i=1}^d a_i  & \mbox{ for all $a \in A$}
\end{align*}
so that $\sum_{i=1}^d (a_i - z_i) \geq - \lambda$ and $a - z  \in \lambda K$ for all $a \in A$.  Hence $R(A,K) \leq \delta_{neg}(A)$.

We have shown that $(X,\delta)$ is of negative type if and only if it can be embedded into a Minkowski diversity with kernel equal to the particular simplex $K$. The theorem now follows from Proposition~\ref{prop:utility}(b) and the fact that every simplex in $\Re^d$ can be transformed into another by a non-degenerate affine map.

\end{proof}

The last theorem immediately implies that two further classes are Minkowski-embeddable.

\begin{cor} \label{NegConsequences}
If $\delta$ is diversity on three elements or a finite diversity that can be embedded in $L_1$, then $\delta$ is Minkowski-embeddable.
\end{cor}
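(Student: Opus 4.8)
The plan is to reduce both cases to the property of \emph{negative type}, after which Theorem~\ref{NegativeType} yields Minkowski-embeddability at once. Two preliminary remarks drive everything. First, negative type is preserved under isometric embedding: if $(X,\delta)$ embeds into a negative-type diversity $(Y,\delta')$ via $\phi$, then for any zero-sum vector $x$ indexed by the non-empty subsets of $X$, the push-forward $z_S = \sum_{A:\phi(A)=S} x_A$ is a zero-sum vector indexed by the non-empty subsets of $\phi(X)$, and $\sum_{A,B} x_A x_B \delta(A\cup B) = \sum_{S,T} z_S z_T \delta'(S\cup T) \le 0$. Second, the negative-type inequality is linear in $\delta$, so the class of negative-type (semi)diversities is closed under non-negative linear combinations.

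For the $L_1$ case I would decompose the $L_1$ diversity as $\delta_1 = \sum_{i=1}^d \rho_i$, where $\rho_i(B) = \max_{b\in B} b_i - \min_{b \in B} b_i$ is the range in the $i$-th coordinate. Each $\rho_i$ is precisely $\delta_{neg}$ in dimension one and is therefore of negative type, so $\delta_1$ is of negative type by closure under sums, and any $L_1$-embeddable diversity inherits negative type by the preservation remark.

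For the three-element case, write $X=\{a,b,c\}$ and set $(p,q,r,s) = (\delta(\{a,b\}),\delta(\{a,c\}),\delta(\{b,c\}),\delta(X))$, relabelling so that $p\ge q\ge r$. I would express $\delta$ as a non-negative combination of five negative-type building blocks: the three split semidiversities $\delta_{a|bc},\delta_{b|ac},\delta_{c|ab}$, each of which embeds into one-dimensional $\delta_{neg}$ by sending the two elements on one side of the cut to $0$ and the remaining element to $1$; the clique diversity $\delta(A)=|A|-1$ (all pairs $1$, triple $2$), which is negative type by Proposition~\ref{prop:special}(b) and Theorem~\ref{NegativeType}; and the constant diversity equal to $1$ on every non-singleton, which is negative type by Proposition~\ref{prop:special}(a) and Theorem~\ref{NegativeType}. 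Matching the three pair values and the triple value gives a linear system for the five weights with one degree of freedom; since the clique and constant diversities coincide on pairs but contribute $2$ and $1$ respectively to the triple, varying their relative weight is exactly what tunes the triple value $s$.

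The main obstacle is ensuring the weights can always be chosen non-negative. I would fix the free parameter by taking the combined clique-plus-constant weight to equal $q+r-p$, which is non-negative precisely because $q+r\ge p$ is the triangle inequality for the induced metric. The three split weights then reduce to $p-r$, $p-q$ and $0$, the clique weight to $s-p$, and the constant weight to $q+r-s$. The diversity axioms force $p \le s \le q+r$, so all five weights are non-negative. Granting this calculation, every three-element diversity is a non-negative combination of negative-type diversities, hence itself of negative type, and Theorem~\ref{NegativeType} then delivers Minkowski-embeddability, completing the proof.
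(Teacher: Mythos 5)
Your proof is correct and shares its skeleton with the paper's: both reduce the two hypotheses to the statement that $(X,\delta)$ has negative type and then invoke Theorem~\ref{NegativeType}. The difference is that the paper disposes of both negative-type claims in a single line by citing \cite{WuBryantEtal19}, whereas you prove them from scratch. For $L_1$ you decompose $\delta_1$ into coordinate ranges, each of which is the one-dimensional $\delta_{neg}$, and use the (easily verified) facts that negative type is preserved by isometric embedding and by non-negative linear combinations; for three points you give an explicit non-negative decomposition into two split semidiversities, the clique diversity and the constant diversity, with weights $p-r$, $p-q$, $s-p$ and $q+r-s$, whose non-negativity does indeed follow from monotonicity ($s\geq p$) and from the diversity triangle inequality applied at the vertex common to the two smallest pairs ($s\leq q+r$); I checked the linear algebra and it is right. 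What your route buys is self-containment and an explicit ``cut decomposition'' of three-point diversities, analogous to the classical fact that every three-point metric decomposes into cut metrics; what the paper's route buys is brevity, since both facts are already established in the cited reference. One small formality worth adding: negative type is defined in the paper only for \emph{finite} diversities, so in the $L_1$ case you should phrase the argument as showing that the restriction of $\delta_1$ to the finite image $\phi(X)$ has negative type (your coordinate-range argument does exactly this), and you should note explicitly that the negative-type inequality and its closure properties extend verbatim to the semidiversities (splits, coordinate ranges) appearing in your decompositions.
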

\begin{proof}
All three-element diversities and $L_1$-embeddable diversities have negative type \cite{WuBryantEtal19}.
\end{proof}

\section{Open problems}

We have characterized diversities and functions defined by the generalized circumradius $R(A,K)$, and established preliminary results on embedding finite diversities into these spaces. Our results suggest several avenues for further investigation.

First, is there a complete characterization of when a finite diversity is Minkowski-embeddable? Indeed it is not even obvious which finite diversities can be embedded into {\em sublinear} diversities in $\Re^d$. 

A second related question is algorithmic in nature: Are there efficient algorithms 
	for determining whether or not a finite diversity can be embedded in some dimension?
	Interestingly, we note that for the classical case of a circumradius, even though 
	we do not know a characterization for Minkowski-embeddability,
	we are able to give an efficient algorithm for deciding embeddability (for bounded dimension):

\begin{prop}\label{prop:polynomial}
	Let $(X,\delta)$ be a finite diversity such that $\delta(A) = \max\{\delta(A'): A' \subseteq A, |A'| \leq d+1\}$ for all $A \subseteq X$. For fixed $d$, there is an algorithm which runs in polynomial time in $n=|X|$ to determine if $(X,\delta)$  is Minkowski-embeddable in $\Re^d$ with kernel  equal to the unit ball $\sB$.
\end{prop}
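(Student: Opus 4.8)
The plan is to reduce the embeddability question to a finite-dimensional containment problem that can be decided by examining only the ``small'' subsets of $X$, and then to phrase that problem as a feasibility question solvable in polynomial time for fixed $d$. The key simplification is the hypothesis $\delta(A) = \max\{\delta(A') : A' \subseteq A,\ |A'| \le d+1\}$, which says that $\delta$ is already determined by its restriction to subsets of size at most $d+1$. Combined with the Helly-type coreset result (the remark following Proposition~\ref{jahn1}, or Proposition~\ref{coreset} with $\epsilon = 0$), this means that any candidate embedding $\phi : X \to \Re^d$ automatically satisfies $R(\phi(A),\sB) = \max\{R(\phi(A'),\sB) : |A'| \le d+1\}$, so verifying the isometry $\delta(A) = R(\phi(A),\sB)$ for all $A$ reduces to verifying it on the $O(n^{d+1})$ subsets of size at most $d+1$.

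First I would make precise the target function: for the unit ball $\sB$, the circumradius $R(Y,\sB)$ of a finite point set $Y$ is the radius of the smallest enclosing Euclidean ball, which for fixed $d$ is a computable quantity. The embeddability condition thus becomes the existence of points $\phi(x_1),\dots,\phi(x_n) \in \Re^d$ such that for every subset $A' = \{x_{i_1},\dots,x_{i_k}\}$ with $k \le d+1$, the minimum enclosing ball of $\{\phi(x_{i_1}),\dots,\phi(x_{i_k})\}$ has radius exactly $\delta(A')$. I would encode the radius-$r$ enclosing-ball condition algebraically: $R(Y,\sB) \le r$ holds precisely when there is a center $c$ with $\|\phi(x_{i_j}) - c\|^2 \le r^2$ for all $j$, while $R(Y,\sB) \ge r$ is the complementary polynomial condition. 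Each constraint is therefore a polynomial (in)equality in the coordinates of the $\phi(x_i)$ and the auxiliary centers.

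Next I would observe that the entire system is a finite conjunction of polynomial equalities and inequalities over the reals in a fixed number of variables: there are $dn$ coordinate variables for the embedding, plus $O(n^{d+1})$ auxiliary center variables (one center per relevant subset), and $O(n^{d+1})$ polynomial constraints each of bounded degree (degree at most $2$ in the enclosing-ball formulation). Deciding whether such a semialgebraic system is feasible is exactly a problem that can be solved by the decision procedure for the existential theory of the reals; for a system whose number of variables and polynomials is polynomial in $n$ and whose degrees are bounded (with $d$ fixed), this runs in time polynomial in $n$. Establishing the coreset reduction in the first paragraph is what guarantees that feasibility of the size-$\le (d{+}1)$ system is equivalent to genuine Minkowski-embeddability, rather than merely a necessary condition.

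I expect the main obstacle to be the passage from ``feasibility of the coreset constraint system'' to ``genuine embeddability''—that is, ensuring that an embedding satisfying the isometry on all subsets of size at most $d+1$ really does satisfy it on all of $X$. This is where the structural hypothesis on $\delta$ must be used carefully together with the Helly/coreset property of $R(\cdot,\sB)$: one must check that both the diversity $\delta$ and the circumradius function $R(\phi(\cdot),\sB)$ collapse to their values on small subsets in a \emph{compatible} way, so that matching them on small subsets forces them to match everywhere. A secondary technical point is to confirm that the smallest-enclosing-ball radius is semialgebraically definable with bounded degree and that the resulting quantifier-free system has size polynomial in $n$ for fixed $d$, so that the real-algebraic decision procedure indeed yields a polynomial-time bound.
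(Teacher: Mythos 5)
Your reduction to subsets of size at most $d+1$ is sound and matches the paper's use of Proposition~\ref{coreset}, but the decision procedure you propose does not yield a polynomial-time algorithm. The semialgebraic system you write down has at least $dn$ real unknowns (the coordinates of $\phi(x_1),\dots,\phi(x_n)$), i.e.\ a number of variables growing linearly in $n$. Decision procedures for the existential theory of the reals run in time exponential in the number of variables (roughly $(sD)^{O(k)}$ for $s$ polynomials of degree $D$ in $k$ variables); they are polynomial only when the number of \emph{variables} is fixed, not merely when it is polynomial in $n$. Indeed the existential theory of the reals is NP-hard, so a general feasibility solver cannot be the engine of a polynomial-time algorithm here. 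A secondary issue is that the condition $R(Y,\sB)\geq r$ is universally quantified over candidate centers, so encoding the \emph{exact} radius requires either quantifier elimination or an explicit optimality certificate for the minimum enclosing ball, adding further cost.

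The missing idea is a rigidity argument that removes the search over embeddings entirely. If any embedding $\phi$ realizing $\delta$ with kernel $\sB$ exists, then the metric induced by $\delta$ is Euclidean; and any two realizations of a Euclidean metric in $\Re^d$ differ by a global isometry of $\Re^d$, since the isometry between the two finite point sets extends to the whole space. Isometries of $\Re^d$ preserve the circumradius with respect to the ball, so \emph{every} metric-preserving map $\psi:X\to\Re^d$ then satisfies $\delta_{\sB}(\psi(A))=\delta(A)$ for all $A$. The algorithm is therefore: test whether the induced metric is Euclidean and, if so, compute one canonical metric embedding $\psi$ (both steps polynomial time via Cayley--Menger determinants and Gram matrix factorization), then check $\delta_{\sB}(\psi(A))=\delta(A)$ only on the $O(n^{d+1})$ subsets of size at most $d+1$. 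Your coreset reduction is exactly what justifies this last verification step; what your argument lacks is the observation that a single deterministic candidate $\psi$ suffices, so that no feasibility search over embeddings is needed.
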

\begin{proof}
We begin with a useful observation. Suppose that there is an (unknown) embedding $\phi:X \rightarrow \Re^d$ such that $\delta(A) = \delta_{\sB}(\phi(A))$ for all $A \subseteq X$. Note that since the metric induced by $\delta_{\sB}$ is Euclidean so is the one induced by $\delta$. Let $\psi:X \rightarrow \Re^d$ be any map which preserves the metrics induced by $\delta$ and $\delta_{\sB}$. In addition, let
$f$ be the (unknown) isometry from $\psi(X)$ to $\phi(X)$ given by $f(\psi(x)) = \phi(x)$ for all $x \in X$.  As $\Re^d$ is a finite dimensional Hilbert space, $f$ can be extended to an isometry on the whole space $\Re^d$ (see, e.g., \cite{WellsWilliams12}, Theorem 11.4). Moreover, for any $A \subseteq X$ we have
\begin{align*}
    \delta_{\sB}(\psi(A))  & = 	 \inf\Big\{ \sup\{\|\psi(a)-x\|_2:a \in A\}: x \in \Re^d \Big\} \\
    & = \inf\Big\{ \sup \{\|f(\psi(a))-f(x)\|_2:a \in A\}: x \in \Re^d \Big\} \\
    & = \inf\Big\{\sup \{\|\phi(a)-y\|_2:a \in A\}: y \in \Re^d \Big\} \\
    & = \delta_{\sB}(\phi(A)) \\
    & = \delta(A).
\end{align*}
Hence, if  $(X,\delta)$ is Minkowski embeddable 
into $(\Re^d,\delta_{\sB})$, then the map $\psi$ gives one embedding.

We now present an algorithm for deciding whether or not $(X,\delta)$ is embeddable in $(\Re^d,\delta_{\sB})$:
\begin{enumerate}
    \item Decide whether or not the metric induced by $\delta$ on $X$ is Euclidean. If not, then $(X,\delta)$ cannot be embedded in $(\Re^d,\delta_{\sB})$. Else, 
    compute a (metric) embedding $\psi$ of $X$ in $\Re^d$ which preserves the induced metrics. 
    \item If $\delta_{\sB}(\psi(A)) = \delta(A)$ for all $A$ with $|A| \leq d+1$ then $(X,\delta)$ can be embedded in $(\Re^d,\delta_{\sB})$, otherwise $(X,\delta)$ cannot be embedded in $(\Re^d,\delta_{\sB})$.
\end{enumerate}

The correctness of this algorithm follows by the observation above. To see that it also runs in polynomial time in $n$ (for fixed $d$), note that
Step~1 can be computed in polynomial time in $n$ by the results in e.g. \cite[Section 6.2]{DezaLaurent97} or \cite[Theorem 2.1]{WellsWilliams12}, and that for Step~2 the definition of $\delta$ and Proposition~\ref{coreset} imply that to determine  whether $\delta_{\sB}(\psi(A)) = \delta(A)$ for all $A \subseteq X$ we need only check subsets $A$ with $|A|\leq d+1$.
\end{proof}

Another question is how to extend the embedding results to include distortion. Let $(X_1,\delta_1)$ and $(X_2,\delta_2)$ be two diversities. We say that a map $\phi:X_1 \rightarrow X_2$ has {\em distortion} $c$ if there are $c_1,c_2>0$ such that $c=c_1c_2$ and
\[\frac{1}{c_1} \delta_1(A) \leq \delta_2(\phi(A)) \leq c_2 \delta_1(A)\]
for all finite $A \subseteq X$. Continuing the program of \cite{LinialLondonEtal95}, it is shown in \cite{BryantTupper14} that bounds on the distortion of embeddings from diversities into $L_1$-diversities provide approximation algorithms for hypergraph generalizations of sparsest cut. It was shown in \cite{WuBryantEtal19} that there are finite diversities of metric type which cannot be embedded into $L_1$ without at least $\Omega(\sqrt{\log |X|})$ distortion. This bound therefore holds for Minkowski-embeddable diversities. In general, questions concerning distortion seem intricately connected with core sets of the generalized circumradius  \cite{BrandenbergKonig13}. 
 
Apart from potential algorithmic gains it would be good to explore embeddings with distortion for diversities into Minkowski diversities with low dimension, simply for their use in visualization and modelling of diversity type data. 

\subsection*{Acknowledgments}

We thank Pei Wu and Malcolm Jones for discussions and ideas which helped initiate this paper.  PT is supported by an NSERC (Canada) Discovery Grant. DB, KTH and VM thank the Royal Society for its support. We also thank the reviewers for their helpful comments.

\bibliographystyle{acm}
\bibliography{MinkowskiArXiv}

\end{document}